
\documentclass[reqno]{amsart}

\usepackage{amsmath,amsthm,amssymb,amscd}
%
%
 \newtheorem{thm}{Theorem}[section]
 \newtheorem{cor}[thm]{Corollary}
 \newtheorem{lem}[thm]{Lemma}
 \newtheorem{prop}[thm]{Proposition}
 \theoremstyle{definition}
 
 \newtheorem{ques}[thm]{Question}
 \theoremstyle{remark}
 
 \newtheorem*{ex}{Example}
 \numberwithin{equation}{section}

\begin{document}

%
%
%
%
%
%
%
%

\vspace{1cm}

\title[$p$-adic quotient sets: linear recurrence sequences]
 {$p$-adic quotient sets: linear recurrence sequences}

\author[Deepa Antony]{Deepa Antony}

\address{
	Department of Mathematics \\
	Indian Institute of Technology Guwahati \\
	Assam, India, PIN- 781039}

\email{deepa172123009@iitg.ac.in}

\author[Rupam Barman]{Rupam Barman}
\address{Department of Mathematics \\
	Indian Institute of Technology Guwahati \\
	Assam, India, PIN- 781039}
\email{rupam@iitg.ac.in}
\subjclass{Primary 11B05, 11B37, 11E95}

\keywords{$p$-adic number, Quotient set, Ratio set, Linear recurrence sequence, Pisot number}


\begin{abstract}
	Let $(x_n)_{n\geq0}$ be a linear recurrence of order $k\geq2$ satisfying
	$$x_n=a_1x_{n-1}+a_2x_{n-2}+\dots+a_kx_{n-k}$$ for all integers $n\geq k$, where $a_1,\dots,a_k,x_0,\dots, x_{k-1}\in \mathbb{Z},$ with $a_k\neq0$. 
	In [`The quotient set of $k$-generalised Fibonacci numbers is dense in $\mathbb{Q}_p$', \emph{Bull. Aust. Math. Soc.} \textbf{96} (2017), 24-29], 
	Sanna posed an open question to classify primes $p$ for which the quotient set of $(x_n)_{n\geq0}$ is dense in $\mathbb{Q}_p$. 
	In this article, we find a sufficient condition for denseness of the quotient set of the $k$th-order linear recurrence  $(x_n)_{n\geq0}$ satisfying $ x_{n}=a_1x_{n-1}+a_2x_{n-2}+\dots+a_kx_{n-k}$ 
	for all integers $n\geq k$ with initial values $x_0=\dots=x_{k-2}=0,x_{k-1}=1$, where $a_1,\dots,a_k\in \mathbb{Z}$ and $a_k=1$. We show that given a prime $p$, 
	there exist infinitely many recurrence sequences of order $k\geq 2$ so that their quotient sets are not dense in $\mathbb{Q}_p$. 
	We also study the quotient sets of linear recurrence sequences with coefficients in some arithmetic and geometric progressions.
\end{abstract}

\maketitle
\section{Introduction and statement of results} 
For a set of integers $A$, the set $R(A)=\{a/b:a,b\in A, b\neq 0\}$ is called the ratio set or quotient set of $A$. Several authors have studied the denseness of ratio sets of different subsets of $\mathbb{N}$ in the positive real numbers. 
See for example \cite{real-1, real-2, real-3, real-4, real-5, real-6, real-7, real-8, real-9, real-10, real-11, real-12, real-13, real-14}. An analogous study has also been done for algebraic number fields, see for example \cite{algebraic-1, algebraic-2}. 
\par For a prime $p$, let $\mathbb{Q}_p$  denote the field of $p$-adic numbers. In recent years, the denseness of ratio sets in $\mathbb{Q}_p$ have been studied by several authors, 
see for example \cite{cubic, diagonal, Donnay, garciaetal, garcia-luca, miska, piotr, miska-sanna, Sanna1}. Let $(F_n)_{n\geq 0}$ be the sequence of Fibonacci numbers, 
defined by $F_0=0$, $F_1=1$ and $F_n=F_{n-1}+F_{n-2}$ for all integers $n\geq 2$. In \cite{garcia-luca}, Garcia and Luca showed that the ratio set of Fibonacci numbers is dense in $\mathbb{Q}_p$ for all primes $p$. 
Later, Sanna \cite[Theorem 1.2]{Sanna1} showed that, for any $k\geq 2$ and any prime $p$, the ratio set of the $k$-generalized Fibonacci numbers is dense in $\mathbb{Q}_p$. 
Sanna remarked that his result could be extended to other linear recurrences over the integers. However, he used some specific properties of the $k$-generalized Fibonacci numbers in the proof. Therefore, he made the following open question.
\begin{ques}\cite [Question 1.3]{Sanna1}\label{question-1}
	Let $(S_n)_{n\geq0}$ be a linear recurrence of order $k\geq2$ satisfying
	\begin{align*}
	S_n=a_1S_{n-1}+a_2S_{n-2}+\dots+a_kS_{n-k},
	\end{align*}
	for all integers $n\geq k$, where $a_1,\dots,a_k,S_0,\dots,S_{k-1}\in \mathbb{Z},$ with $a_k\neq0.$ For which prime numbers $p$ is the quotient set of $(S_n)_{n\geq0}$ dense in $\mathbb{Q}_p?$
\end{ques}
 In \cite{garciaetal}, Garcia et al. studied the quotient sets of certain second-order recurrences. To be specific, given two fixed integers $r$ and $s$, let $(a_n)_{n\geq 0}$ be defined by $a_{n}=ra_{n-1}+sa_{n-2}$ 
 for $n\geq 2$ with initial values $a_0=0$ and $a_1=1$; and let $(b_n)_{n\geq 0}$ be defined by $b_{n}=rb_{n-1}+sb_{n-2}$ for $n\geq 2$ with initial values $b_0=2$ and $b_1=r$.
 Garcia et al. proved the following result.
 \begin{thm}\cite[Theorem 5.2]{garciaetal}\label{thm-garcia}
 	Let $A=\{a_n: n\geq 0\}$ and $B=\{b_n: n\geq 0\}$.
 	\begin{enumerate}
 		\item[(a)] If $p\mid s$ and $p\nmid r$, then $R(A)$ is not dense in $\mathbb{Q}_p$.
 		\item[(b)] If $p\nmid s$, then $R(A)$ is dense in $\mathbb{Q}_p$.
 		\item[(c)] For all odd primes $p$,  $R(B)$ is dense in $\mathbb{Q}_p$ if and only if there exists a positive integer $n$ such that $p\mid b_n$.
 	\end{enumerate}
 \end{thm}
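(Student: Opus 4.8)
The plan is to realise $(a_n)$ and $(b_n)$ as the Lucas sequences attached to the characteristic polynomial $x^2-rx-s$. Writing $\alpha,\beta$ for its roots, so that $\alpha+\beta=r$ and $\alpha\beta=-s$, one has the closed forms $a_n=(\alpha^n-\beta^n)/(\alpha-\beta)$ and $b_n=\alpha^n+\beta^n$, which I would check against the initial data. The whole argument is organised around two remarks about denseness in $\mathbb{Q}_p$: first, a set of integers all of whose nonzero members are $p$-adic units has quotient set contained in $\mathbb{Z}_p^\times\cup\{0\}$, hence \emph{not} dense, since no unit (valuation $0$) and not $0$ (at distance $1/p$) can approach $p$, whose valuation is $1$; second, to prove denseness it suffices to show that for every $w\in\mathbb{Z}$, every $\tau\in\mathbb{Z}_p^\times$ and every precision $p^j$ there exist indices with $v_p(a_m)-v_p(a_n)=w$ and with the ratio of the unit parts of $a_m,a_n$ congruent to $\tau$ modulo $p^j$, because an arbitrary target $p^w\tau\in\mathbb{Q}_p$ forces exactly this valuation and unit-part matching.

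Part (a) is then immediate. When $p\mid s$ and $p\nmid r$ the recurrence collapses modulo $p$ to $a_n\equiv r\,a_{n-1}$, so $a_n\equiv r^{\,n-1}\pmod p$ for $n\ge1$; since $p\nmid r$ this shows $p\nmid a_n$ for every $n\ge1$. Thus every nonzero element of $A$ is a $p$-adic unit, $R(A)\subseteq\mathbb{Z}_p^\times\cup\{0\}$, and the first remark gives non-denseness.

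For Part (b) assume $p\nmid s$, so that $\alpha\beta=-s$ is a unit and $\alpha,\beta$ lie in $\mathbb{Z}_p^\times$ (or in the units of the unramified quadratic extension when $D=r^2+4s$ is a nonsquare). First I would control the valuations. The sequence $(a_n\bmod p^k)$ is purely periodic because the companion matrix $\bigl(\begin{smallmatrix} r & s\\ 1 & 0\end{smallmatrix}\bigr)$ has determinant $-s\in\mathbb{Z}_p^\times$; since $a_0=0$ reappears, the rank of apparition $\rho$ (the least $n\ge1$ with $p\mid a_n$) exists. Using the lifting-the-exponent formula $v_p(a_n)=v_p(a_\rho)+v_p(n/\rho)$ for $\rho\mid n$ and $v_p(a_n)=0$ otherwise (with the standard extra care at $p=2$), the valuation $v_p(a_n)$ attains $0$ and every integer $\ge v_p(a_\rho)$, so the ratio valuations $v_p(a_m)-v_p(a_n)$ exhaust all of $\mathbb{Z}$. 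It remains to match the unit parts, and this is the heart of the matter: I would use the purely periodic residue pattern of $(a_n\bmod p^k)$ together with the identity
\[
a_{n+1}a_{n-1}-a_n^2=(-1)^n s^{\,n-1},
\]
whose right-hand side is a unit, to conclude that consecutive terms are never simultaneously divisible by $p$ and that the unit parts occurring among the $a_n$ are spread across the residues modulo $p^j$ densely enough for their ratios to realise every class in $(\mathbb{Z}/p^j\mathbb{Z})^\times$. Combining the valuation control with this unit-part surjectivity produces the required approximations, hence denseness.

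Part (c) follows the same template for the companion sequence $(b_n)$, with $p$ odd ensuring $b_0=2$ is a unit. If no $b_n$ is divisible by $p$ then every $b_n$ is a unit, $R(B)\subseteq\mathbb{Z}_p^\times$, and $R(B)$ is not dense, which gives one implication. Conversely, if $p\mid b_\sigma$ for some least $\sigma\ge1$, then the rank of apparition of $(b_n)$ exists, and I would rerun the Part (b) machinery: lifting the exponent makes $v_p(b_n)$ unbounded so the ratio valuations cover $\mathbb{Z}$, while the norm relation $b_n^2-D\,a_n^2=4(-s)^n$ together with periodicity governs the unit parts. I expect the main obstacle throughout to be precisely this unit-part covering, namely proving that the finitely many residues realised in one period have ratios filling all of $(\mathbb{Z}/p^j\mathbb{Z})^\times$ and that a single pair of indices can meet the valuation and unit-part requirements simultaneously; the secondary nuisances are the exceptional valuation behaviour at $p=2$ (harmlessly excluded in (c)) and the bookkeeping when the roots live in a quadratic extension.
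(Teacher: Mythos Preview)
This theorem is quoted from \cite{garciaetal} and is not proved in the present paper, so there is no in-paper argument to compare against. That said, a few remarks on your plan are in order.

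Your treatment of Part (a) is correct and complete.

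For Parts (b) and (c), however, your strategy of separately matching the valuation and the unit part of the target carries a genuine gap that you yourself flag: the claim that the ratios of the unit parts of the $a_n$ (respectively $b_n$) realise every class of $(\mathbb{Z}/p^j\mathbb{Z})^\times$ is asserted but not argued, and the Cassini-type identity $a_{n+1}a_{n-1}-a_n^2=(-1)^n s^{\,n-1}$ only tells you that consecutive terms are coprime to $p$, not that their unit residues are well spread. Even granting that step, you still have to produce a \emph{single} pair of indices meeting both the valuation and the unit constraints, which is another nontrivial synchronisation.

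The approach in \cite{garciaetal} (and the one this paper reuses in its proof of Theorem~\ref{thm3}) sidesteps all of this. One does not aim at an arbitrary $p^w\tau$ directly; instead one shows that $R(A)$ is $p$-adically dense in $\mathbb{N}$ and then invokes Lemma~\ref{lem2}. Concretely, since $p\nmid s$ the roots $\alpha,\beta$ are units in $\mathbb{Z}_p$ or in its unramified quadratic extension, so there exists $k$ with $\alpha^k\equiv\beta^k\equiv 1\pmod{p^j}$; then
\[
\frac{a_{kn}}{a_k}=\sum_{i=0}^{n-1}(\alpha^k)^{\,i}(\beta^k)^{\,n-1-i}\equiv n\pmod{p^j},
\]
so every positive integer is approximated. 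The same device handles the ``if'' direction of (c) once some $b_\sigma$ is divisible by $p$; the ``only if'' direction is exactly your unit argument. This route avoids lifting-the-exponent bookkeeping, the $p=2$ headaches, and the unit-part surjectivity problem entirely.
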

In this article, we study ratio sets of some other linear recurrences over the set of integers. Our results give some answers to Question \ref{question-1}. 
Our first result gives a sufficient condition for the denseness of the ratio sets of certain $k$th-order recurrence sequences. Finding a general solution to Question \ref{question-1} seems to be a difficult problem. 
Hence, in Theorem \ref{thm5}, we consider $k$th-order recurrence sequences for which $a_k=1$ and initial values $S_0=\cdots=S_{k-2}=0$, $S_{k-1}=1$. 
	Recall that a \emph{Pisot number} is a positive algebraic integer greater than $1$ all of whose conjugate elements have absolute value less than $1$.
		\begin{thm}\label{thm5}
		Let $(x_n)_{n\geq 0}$ be a $kth$-order linear recurrence satisfying
		\begin{align*} x_{n}=a_1x_{n-1}+a_2x_{n-2}+\dots+a_{k-1}x_{n-k+1}+x_{n-k}
		\end{align*}
		for all integers $n\geq k$ with initial values $x_0=x_1=\cdots=x_{k-2}=0, x_{k-1}=1$ and $a_1,\dots,a_{k-1}\in \mathbb{Z}$. 
		Suppose that the characteristic polynomial of the reccurence sequence has a root $\pm \alpha$, where $\alpha$ is a Pisot number. 
		If $p$ is a prime such that the characteristic polynomial of the recurrence sequence is irreducible in $\mathbb{Q}_p$, 
		then the quotient set of $(x_n)_{n\geq 0}$ is dense in $\mathbb{Q}_p$.
	\end{thm}
	If we take $k=3$ in Theorem \ref{thm5}, then we have the following corollary.
	\begin{cor}\label{cor5}
		Let $(x_n)_{n\geq 0}$ be a third-order linear recurrence satisfying
		\begin{align*} x_{n}=ax_{n-1}+bx_{n-2}+x_{n-3}
		\end{align*}
		for all integers $n\geq 3$ with initial values $x_0=x_1=0, x_2=1$, and the integers $a$ and $b$ are such that $(a+b)(b-a-2)<0$. If $p$ is a prime such that the characteristic polynomial of the recurrence sequence is irreducible in $\mathbb{Q}_p$,  then the quotient set of $(x_n)_{n\geq 0}$ is dense in $\mathbb{Q}_p$.
	\end{cor}

We discuss two examples as applications of Corollary \ref{cor5}.
\begin{ex} For $a\in \mathbb{N}$, let $\ell$ be an odd positive integer less than $2a$. Let $(x_n)_{n\geq 0}$ be a linear recurrence satisfying
	\begin{align*} x_{n}=ax_{n-1}+(a-\ell)x_{n-2}+x_{n-3}
	\end{align*}
	for all integers $n\geq 3$ with initial values $x_0=x_1=0, x_2=1$. 
	Then, $a$ and $b:=a-\ell$ satisfy $(a+b)(b-a-2)<0$. The characteristic polynomial is $p(x)=x^3-ax^2-(a-\ell)x-1$. Since $p(0)\neq 0$ and $p(1)=-2a+\ell\not\equiv0\pmod{2}$,  
	hence $p(x)$ is irreducible in $\mathbb{Q}_2$. Therefore, by Corollary \ref{cor5}, $R((x_n)_{n\geq 0})$ is dense in $\mathbb{Q}_2$.
\end{ex}
\begin{ex} For $a\in \mathbb{N}$ such that $3\nmid a$, let $\ell$ be an odd positive integer less than $2a$ and $3\mid \ell$. Let $(x_n)_{n\geq 0}$ be a linear recurrence satisfying
	\begin{align*} x_{n}=ax_{n-1}+(a-\ell)x_{n-2}+x_{n-3}
	\end{align*}
	for all integers $n\geq 3$ with initial values $x_0=x_1=0, x_2=1$. 
	Then, $a$ and $b=a-\ell$ satisfy $(a+b)(b-a-2)<0$. The characteristic polynomial is $p(x)=x^3-ax^2-(a-\ell)x-1$. Since $p(0)\neq 0$, $p(1)=-2a+\ell\not\equiv0\pmod{3}$ and $p(2)=-6a+2\ell+7\not\equiv0\pmod{3}$,  
	hence $p(x)$ is irreducible in $\mathbb{Q}_3$. Therefore, by Corollary \ref{cor5}, $R((x_n)_{n\geq 0})$ is dense in $\mathbb{Q}_3$.
\end{ex}
Next, we consider recurrence sequences whose $n$-th term depends on all the previous $n-1$ terms and obtain the following results.
\begin{thm}\label{thm3}
Let $(x_n)_{n\geq 0}$ be a linear recurrence satisfying
\begin{align*}
x_{n}=x_{n-1}+2x_{n-2}+\dots+(n-1)x_1+nx_0
\end{align*}
for all integers $n\geq 1$ with initial value $x_0=1$.	Then the quotient set of $(x_n)_{n\geq 0}$ is dense in $\mathbb{Q}_p$ for all primes $p$.  
\end{thm}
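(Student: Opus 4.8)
The plan is to collapse the convolution-type recurrence into an honest constant-coefficient second-order recurrence and then appeal to Theorem \ref{thm-garcia}. Rewriting the defining relation as $x_n=\sum_{i=0}^{n-1}(n-i)x_i$ for $n\geq 1$, I would first pass to the generating function $f(t)=\sum_{n\geq 0}x_n t^n$. Since $\sum_{j\geq 0}jt^j=t/(1-t)^2$, the sum on the right-hand side is exactly $[t^n]\bigl(\tfrac{t}{(1-t)^2}f(t)\bigr)$ for every $n\geq 1$ (the term $i=n$ contributes nothing), while this coefficient vanishes at $n=0$, where $x_0=1$. Hence the recurrence is equivalent to the functional equation
\begin{align*}
f(t)-1=\frac{t}{(1-t)^2}\,f(t),\qquad\text{so that}\qquad f(t)=\frac{(1-t)^2}{1-3t+t^2}.
\end{align*}
Clearing denominators gives $(1-3t+t^2)f(t)=(1-t)^2$, and comparing coefficients of $t^n$ shows $x_0=1$, $x_1=1$, $x_2=3$, together with
\begin{align*}
x_n=3x_{n-1}-x_{n-2}\qquad\text{for all }n\geq 3.
\end{align*}
The same reduction can be obtained without generating functions, via the telescoping identity $x_n-x_{n-1}=x_{n-1}+\sum_{i=0}^{n-2}x_i$ for $n\geq 2$ and then differencing once more.

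With this second-order recurrence in hand, I would identify $(x_n)_{n\geq 1}$ with the sequence $(a_n)_{n\geq 0}$ of Theorem \ref{thm-garcia} for the parameters $r=3$ and $s=-1$: indeed $a_0=0$, $a_1=1$, and $a_n=3a_{n-1}-a_{n-2}$ yield $a_1=1,\,a_2=3,\,a_3=8,\dots$, which coincide with $x_n$ for $n\geq 1$ (these are the even-indexed Fibonacci numbers $F_{2n}$). The decisive point is that $s=-1$, so \emph{no} prime $p$ divides $s$; consequently part (b) of Theorem \ref{thm-garcia} applies for every prime $p$ and gives that $R(\{a_n:n\geq 0\})$ is dense in $\mathbb{Q}_p$. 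This uniform applicability across all $p$ is exactly what forces the conclusion to hold for all primes.

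Finally I would transfer the denseness back to $(x_n)_{n\geq 0}$. Because $a_0=0$, we have $R(\{a_n:n\geq 0\})=R(\{a_n:n\geq 1\})\cup\{0\}$, and deleting the single point $0$ from a dense subset of $\mathbb{Q}_p$ leaves a dense set; hence $R(\{a_n:n\geq 1\})$ is dense. Since $R((x_n)_{n\geq 0})\supseteq R((x_n)_{n\geq 1})=R(\{a_n:n\geq 1\})$, the quotient set of $(x_n)_{n\geq 0}$ is dense in $\mathbb{Q}_p$ for all primes $p$. I expect the only genuine obstacle to be the first step, namely recognizing and proving that the variable-length convolution recurrence is secretly the constant-coefficient recurrence $x_n=3x_{n-1}-x_{n-2}$; once that is established, the statement follows immediately from Theorem \ref{thm-garcia}(b), with the case $s=-1$ explaining why every prime works.
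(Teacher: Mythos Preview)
Your argument is correct. Both you and the paper begin by recognising that $x_n=F_{2n}$ for $n\geq 1$ (equivalently, that the sequence satisfies the constant-coefficient recurrence $x_n=3x_{n-1}-x_{n-2}$), but the routes to the conclusion diverge from there. The paper invokes the Garcia--Luca result that $R(\{F_n\})$ is dense in $\mathbb{Q}_p$ only to extract that $\nu_p(F_n)$ is unbounded, and then carries out an explicit congruence computation with the Binet formula and Euler's theorem to show that $x_{kn}/x_k\equiv n\pmod{p^j}$ for a suitable $k$, yielding $p$-adic denseness of $R((x_n))$ in $\mathbb{N}$ and concluding via Lemma~\ref{lem2}. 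Your approach instead observes that the shifted sequence coincides with the $(a_n)$ of Theorem~\ref{thm-garcia} for $r=3$, $s=-1$, so part~(b) applies uniformly to every prime since $s=-1$ has no prime divisors; this is shorter and avoids any explicit congruence work, at the cost of treating Theorem~\ref{thm-garcia}(b) as a black box. The paper's argument, by contrast, is more self-contained and makes visible the mechanism (the quotient $x_{kn}/x_k$ approximating $n$) that underlies the denseness. Your handling of the discrepancy at $n=0$ (removing $a_0=0$ and noting that deleting a single point preserves denseness in $\mathbb{Q}_p$) is a clean way to finish.
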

The recurrence relation given in Theorem \ref{thm3} generates a subsequence of the Fibonacci sequence.
\begin{thm}\label{thm4}
	Let $(x_n)_{n\geq 0}$ be a linear recurrence satisfying
	\begin{align*} 
	x_{n}=ax_{n-1}+arx_{n-2}+\dots+ar^{n-1}x_0
	\end{align*}
	for all integers $n\geq 1$, and $x_0, a,r\in \mathbb{Z}$. Then the quotient set of $(x_n)_{n\geq 0}$ is not dense in $\mathbb{Q}_p$ for all primes $p$.  
\end{thm}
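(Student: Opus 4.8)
The plan is to show that this full-history recurrence secretly collapses to a geometric progression, whence the quotient set is confined to finitely many cosets of a cyclic subgroup of $\mathbb{Q}_p^\times$ and therefore cannot be dense.

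First I would extract a closed form. Writing $\alpha=a+r$ and using the defining expression $x_n=\sum_{j=0}^{n-1}ar^{\,n-1-j}x_j$, I would subtract $r x_n=\sum_{j=0}^{n-1}ar^{\,n-j}x_j$ from $x_{n+1}=\sum_{j=0}^{n}ar^{\,n-j}x_j$; every term with $0\le j\le n-1$ cancels and only the $j=n$ term $a x_n$ survives, giving
\begin{align*}
x_{n+1}=(a+r)\,x_n \qquad (n\ge 1).
\end{align*}
Hence $x_n=a x_0\,\alpha^{\,n-1}$ for all $n\ge1$, so apart from the single initial value $x_0$ the sequence is geometric with ratio $\alpha$.

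Next I would describe the quotient set explicitly. In the main case $x_0\neq0$, $a\neq0$ and $\alpha\neq0$ every term is nonzero, and a direct computation of $x_m/x_n$ over the four cases $m,n\ge1$; $m=0,\,n\ge1$; $m\ge1,\,n=0$; $m=n=0$ shows that
\begin{align*}
R\big((x_n)_{n\ge0}\big)\subseteq\{1,\,a,\,a^{-1}\}\cdot\langle\alpha\rangle,
\end{align*}
where $\langle\alpha\rangle=\{\alpha^k:k\in\mathbb{Z}\}$. Thus it suffices to prove the following rigidity fact: for $\alpha\in\mathbb{Q}_p^\times$ and a finite set $C\subseteq\mathbb{Q}_p^\times$, the set $S=C\cdot\langle\alpha\rangle$ is not dense in $\mathbb{Q}_p$. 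Let $v_p$ denote the $p$-adic valuation and consider the open shell $U_m=\{x\in\mathbb{Q}_p:v_p(x)=m\}$. If $v_p(\alpha)\neq0$, then for each $c\in C$ the equation $v_p(c)+k\,v_p(\alpha)=m$ has at most one solution $k\in\mathbb{Z}$, so $S\cap U_m$ is finite for every $m$; if $v_p(\alpha)=0$, then $v_p$ is bounded on $S$ by $\max_{c\in C}v_p(c)$, so $S\cap U_m=\varnothing$ for all large $m$. In either case there is a nonempty open shell $U_m$ meeting $S$ in a finite (possibly empty) set, and since $U_m$ is infinite I can choose $y\in U_m\setminus S$ together with a small ball $B(y,\delta)\subseteq U_m$ avoiding the finitely many points of $S\cap U_m$; this ball is disjoint from $S$, so $S$ is not dense.

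Finally I would dispose of the degenerate cases by hand: if $x_0=0$ then every term vanishes and the quotient set is empty; if $a=0$ with $x_0\neq0$ then $x_n=0$ for $n\ge1$ and $R=\{0,1\}$; and if $\alpha=a+r=0$ with $a,x_0\neq0$ then $x_n=0$ for $n\ge2$ and $R=\{0,1,a,a^{-1}\}$ is finite. None of these is dense. The one genuinely delicate point is that when $v_p(\alpha)>0$ the set $S$ does accumulate at $0$, since $\alpha^k\to0$, so a crude ``bounded valuation'' argument fails; the fix is precisely the observation that each valuation shell contains only finitely many elements of $S$, and I expect this shell-finiteness step to be the crux of the proof.
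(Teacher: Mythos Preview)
Your proof is correct and follows the same route as the paper: collapse the full-history recurrence to the geometric progression $x_n=ax_0(a+r)^{n-1}$ for $n\ge1$, then use that a geometric progression has non-dense quotient set in every $\mathbb{Q}_p$. The paper simply cites C\^{i}rnu for the closed form and Garcia et al.\ (their Lemma~2.2) for the non-density, whereas you derive both by hand; your coset-and-shell argument is a mild generalization of that lemma and, incidentally, is more careful about the stray initial term $x_0$, which is not part of the geometric tail and which the paper's one-line invocation of the cited lemma glosses over.
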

In Theorem \ref{thm-garcia}, Garcia et al. studied second-order recurrence relations with specific initial values. In the following result, we consider a particular form of second-order recurrence sequence with arbitrary initial values $x_0$ and $x_1$ in the set of integers.
\begin{thm}\label{thm7}
Let $(x_n)_{n\geq 0}$ be a second-order linear recurrence satisfying $x_{n}=2ax_{n-1}-a^2x_{n-2}$ for all integers $n\geq 2$,  where $a,x_0,x_1\in\mathbb{Z}$. Then the quotient set of $(x_n)_{n\geq 0}$ is dense in $\mathbb{Q}_p$ for all primes $p$ satisfying $p\nmid a(x_1-ax_0)$. 
\end{thm}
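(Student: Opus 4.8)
The plan is to exploit the explicit closed form forced by the repeated characteristic root. Since $x_n=2ax_{n-1}-a^2x_{n-2}$ has characteristic polynomial $(t-a)^2$, the general solution is $x_n=(A+Bn)a^n$, and matching the initial data gives $x_n=a^{n-1}(ax_0+cn)$ with $c:=x_1-ax_0$. Writing $y_n:=ax_0+cn$, the hypothesis $p\nmid a(x_1-ax_0)$ says exactly that $a$ and $c$ are units in $\mathbb{Z}_p$, whence $v_p(x_n)=v_p(y_n)$ and $x_m/x_n=a^{m-n}\,y_m/y_n$. Because $c$ is a unit, $(y_n)$ is an arithmetic progression dense in $\mathbb{Z}_p$, and for any prescribed $v\ge 0$ one can choose $n$ with $v_p(y_n)=v$ while prescribing $y_n/p^v$ modulo any power of $p$. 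The case $\gamma=0$ of density is then immediate: fixing $n$ with $y_n$ a unit and choosing $m$ with $v_p(y_m)$ large forces $v_p(x_m/x_n)$ large.

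For $\gamma\neq 0$ I would write $\gamma=p^vw$ with $w\in\mathbb{Z}_p^\times$; by passing to $\gamma^{-1}$ and interchanging $m$ and $n$ I may assume $v\ge 0$. Everything then reduces to the following surjectivity statement, which I regard as the heart of the matter: for every $k\ge 1$ and every $v\ge 0$, the map $g_v(m):=a^m\,(y_m/p^v)\bmod p^k$, defined on $\{m\ge 0:v_p(y_m)=v\}$, is surjective onto $(\mathbb{Z}/p^k)^\times$. Granting this, I fix $n$ with $y_n$ a unit, set $\tau\equiv w\,a^n y_n\pmod{p^k}$, and choose $m$ with $g_v(m)\equiv\tau$; then $x_m/x_n=p^v\,g_v(m)/(a^n y_n)\equiv p^v w\pmod{p^{v+k}}$, which gives the required approximation.

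To prove the surjectivity of $g_v$ (for odd $p$) I would split $(\mathbb{Z}/p^k)^\times\cong\mu\times P$ into its Teichm\"uller part $\mu\cong(\mathbb{Z}/p)^\times$ and its pro-$p$ part $P=1+p\mathbb{Z}/p^k$. For the $\mu$-direction I work modulo $p$: with $t_1$ the order of $a$ modulo $p$, one has $\gcd(p,t_1)=1$, so the Chinese Remainder Theorem lets me prescribe $y_m\bmod p$ and $a^m\bmod p$ independently, and hence $g_v$ already surjects onto $(\mathbb{Z}/p)^\times$. The delicate $P$-direction is where the geometric factor $a^m$ and the arithmetic factor $y_m$ are entangled through the single variable $m$, and this is the main obstacle. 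I would resolve it by fixing a base index $m_1$ and sweeping along the subfamily $m=m_1+t_1p\,j$. Along it $g_v(m)/g_v(m_1)=a^{t_1pj}\,(y_m/y_{m_1})$ lies in $P$, and taking $p$-adic logarithms one finds that the arithmetic factor contributes a term of valuation exactly $1+v_p(j)$ with unit leading coefficient, while $\log\bigl(a^{t_1pj}\bigr)=pj\log(a^{t_1})$ has valuation at least $2+v_p(j)$, since $a^{t_1}\equiv 1\pmod p$ forces $v_p(\log a^{t_1})\ge 1$. Thus $j\mapsto\log\bigl(g_v(m)/g_v(m_1)\bigr)$ is a near-linear, hence bijective, map $\mathbb{Z}/p^{k-1}\to p\mathbb{Z}/p^k$, so this one-parameter subfamily already sweeps the entire coset $g_v(m_1)P$; letting $m_1$ vary over the $\mu$-direction covers every coset of $P$, giving surjectivity.

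The remaining points are routine: the closed-form derivation, the verification that the higher-order logarithmic terms have strictly larger valuation than $1+v_p(j)$ (so that the sweep is genuinely bijective), and the parallel bookkeeping at $p=2$ using the filtration $1+4\mathbb{Z}_2$ together with the sign factor $\{\pm 1\}$. The conceptual difficulty is concentrated entirely in disentangling $a^m$ from $y_m$ in the pro-$p$ direction, which the sweep $m=m_1+t_1p\,j$ is designed to accomplish.
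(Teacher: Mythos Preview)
Your route is entirely different from the paper's. The paper does not touch the closed form at all: it simply observes that the recurrence has $A=2a$, $B=a^2$, invokes Bumby's criterion (Proposition~\ref{prop1}) to get uniform distribution of $(x_n)$ modulo $p$ from $p\nmid a(x_1-ax_0)$, then lifts to uniform distribution modulo every $p^h$ via Theorem~\ref{thm9} (checking the side conditions at $p=2,3$), and finishes with Lemma~\ref{lem2}(1) since uniform distribution modulo $p^h$ for all $h$ makes $(x_n)$ itself $p$-adically dense in $\mathbb{N}$. Your explicit-formula approach is more hands-on and, if completed, would in effect be reproving Bumby's theorem in this special case; the paper's argument is a two-line citation.

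There is, however, a real gap in your $P$-direction sweep. For the map $g_v$ you need $m$ to stay in $\{m:\nu_p(y_m)=v\}$, but along $m=m_1+t_1pj$ one has $y_m=y_{m_1}+ct_1pj$ with $\nu_p(ct_1pj)=1+\nu_p(j)$. When $v\ge 2$ and $p\nmid j$ this forces $\nu_p(y_m)=1\neq v$, so $g_v(m)$ is not even defined, and your claimed bijection $\mathbb{Z}/p^{k-1}\to p\mathbb{Z}/p^k$ collapses. (Relatedly, your $\mu$-direction phrasing ``prescribe $y_m\bmod p$'' is vacuous for $v\ge 1$ since then $y_m\equiv 0\pmod p$; you mean $y_m/p^v\bmod p$, which depends on $m\bmod p^{v+1}$, not $m\bmod p$.) The fix is to use step $t_1p^{v+1}$ instead of $t_1p$: then $\nu_p(y_m)=v$ for all $j$, the arithmetic factor $(y_m/p^v)/(y_{m_1}/p^v)=1+(\text{unit})\cdot pj$ still has $\log$ of valuation exactly $1+\nu_p(j)$, while the geometric factor contributes $p^{v+1}j\log_p(a^{t_1})$ of valuation at least $v+2+\nu_p(j)>1+\nu_p(j)$, and your bijectivity argument goes through. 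With that correction your strategy is sound for odd $p$; the $p=2$ case will need the analogous adjustment and is not quite as ``routine'' as you suggest.
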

For a prime $p$, let $\nu_p$ denote the $p$-adic valuation function. The following theorem gives a set of linear recurrence sequences of order $k$ whose ratio sets are not dense in $\mathbb{Q}_p.$
\begin{thm}\label{thm8}
		Let $(x_n)_{n\geq 0}$ be a linear recurrence of order $k\geq 2$ satisfying
	\begin{align*} 
	x_{n}=a_1x_{n-1}+\dots+a_kx_{n-k}
	\end{align*}
	for all integers $n\geq k$, where $x_0,\dots,x_{k-1}, a_1,\dots,a_k\in\mathbb{Z}$. If $p$ is a prime such that $p\nmid a_k$ and $\min\{\nu_p(a_j):1\leq j<k\}>\max\{\nu_p(x_m)-\nu_p(x_n):0\leq m,n<k\}$, then the quotient set of $(x_n)_{n\geq 0}$ is not dense in $\mathbb{Q}_p$.
\end{thm}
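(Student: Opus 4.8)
The plan is to prove that the set of $p$-adic valuations $\{\nu_p(x_n) : n \geq 0\}$ is finite, hence bounded; this forces the quotient set to have bounded valuations and so cannot be dense. Indeed, the valuations of nonzero elements of $\mathbb{Q}_p$ form all of $\mathbb{Z}$, so a dense subset must contain elements of arbitrarily negative valuation. If instead $\nu_p(x_a/x_b) = \nu_p(x_a) - \nu_p(x_b)$ ranges over a bounded set, then for $N$ large a small ball around $p^{-N}$ meets no element of the quotient set, and denseness fails.

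Before the main estimate I would observe that the hypothesis already constrains the initial data: if some $x_i = 0$ while some $x_j \neq 0$ (indices in $\{0,\dots,k-1\}$), then $\max\{\nu_p(x_m) - \nu_p(x_n) : 0 \le m,n < k\} = +\infty$ and the valuation condition cannot hold; and if all initial terms vanish the sequence is identically zero. Hence we may assume $x_i \neq 0$ for all $0 \le i < k$, so that $L := \min_{0 \le i < k}\nu_p(x_i)$ and $U := \max_{0 \le i < k}\nu_p(x_i)$ are finite, $\max\{\nu_p(x_m)-\nu_p(x_n)\} = U - L$, and the hypothesis reads $c := \min\{\nu_p(a_j) : 1 \le j < k\} > U - L$.

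The core claim, which I would prove by strong induction on $n$, is that $\nu_p(x_n) = \nu_p(x_{n \bmod k})$ for every $n \ge 0$, where $n \bmod k \in \{0,\dots,k-1\}$. The base case $0 \le n < k$ is immediate. For $n \ge k$ write $x_n = a_k x_{n-k} + \sum_{j=1}^{k-1} a_j x_{n-j}$. Since $p \nmid a_k$ and, by the inductive hypothesis, $\nu_p(x_{n-k}) = \nu_p(x_{n \bmod k}) \le U$, the term $a_k x_{n-k}$ has valuation exactly $\nu_p(x_{n \bmod k})$. Each of the remaining terms satisfies $\nu_p(a_j x_{n-j}) \ge c + \nu_p(x_{(n-j)\bmod k}) \ge c + L > U \ge \nu_p(a_k x_{n-k})$, so $a_k x_{n-k}$ strictly dominates and the non-Archimedean additivity of $\nu_p$ yields $\nu_p(x_n) = \nu_p(a_k x_{n-k}) = \nu_p(x_{n\bmod k})$, finishing the induction. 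Thus $\{\nu_p(x_n) : n \ge 0\} = \{\nu_p(x_0),\dots,\nu_p(x_{k-1})\} \subseteq [L,U]$, every $x_n$ is nonzero, and $\nu_p(x_a/x_b) \in [-(U-L),\, U-L]$ for all admissible $a,b$; the remark in the first paragraph then completes the proof.

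The only genuine obstacle is keeping the inductive estimate airtight: everything rests on the strict inequality $c + L > U$ (equivalently $c > U - L$), which forces the single unit-coefficient term $a_k x_{n-k}$ to have strictly smaller valuation than every other term, so that non-Archimedean additivity delivers an equality of valuations rather than a mere inequality. Once this strict domination is secured, propagating the periodic valuation pattern through the recurrence is routine.
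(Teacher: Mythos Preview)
Your proof is correct and follows essentially the same route as the paper: the key claim $\nu_p(x_n)=\nu_p(x_{n\bmod k})$ is precisely the lemma the paper cites from Sanna (\emph{Fibonacci Quart.} \textbf{54} (2016)), which you re-prove directly via the natural strong induction, and the conclusion that bounded valuations on the quotient set preclude denseness is exactly the paper's Lemma~\ref{lem1}. Your explicit treatment of the degenerate case (some initial $x_i=0$) is a nice addition that the paper leaves implicit.
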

Next, we discuss one example as an application of Theorem \ref{thm8}. Given a prime $p$, this example gives infinitely many recurrence sequences of order $k\geq 2$ so that their quotient sets are not dense in $\mathbb{Q}_p$.
\begin{ex}
Let $(x_n)_{n\geq 0}$ be a linear recurrence of order $k\geq 2$ satisfying
\begin{align*} 
x_{n}=a_1x_{n-1}+\dots+a_kx_{n-k}
\end{align*}
for all integers $n\geq k$, where $x_0=x_1=\dots=x_{k-1}=1$ and $a_1,\dots,a_k\in\mathbb{Z}$. If $p$ is a prime such that $p|a_j,1\leq j\leq k-1$ and $p\nmid a_k$, then by Theorem \ref{thm8}, the quotient set of $(x_n)_{n\geq 0}$ is not dense in $\mathbb{Q}_p$.
\end{ex}
 \section{Preliminaries}
Let $r$ be a nonzero rational number. Given a prime number $p$, $r$ has a unique representation of the form $r= \pm p^k a/b$, where $k\in \mathbb{Z}, a, b \in \mathbb{N}$ and $\gcd(a,p)= \gcd(p,b)=\gcd(a,b)=1$. 
The $p$-adic valuation of $r$ is defined as $\nu_p(r)=k$ and its $p$-adic absolute value is defined as $\|r\|_p=p^{-k}$. By convention, $\nu_p(0)=\infty$ and $\|0\|_p=0$. The $p$-adic metric on $\mathbb{Q}$ is $d(x,y)=\|x-y\|_p$. 
The field $\mathbb{Q}_p$ of $p$-adic numbers is the completion of $\mathbb{Q}$ with respect to the $p$-adic metric. The $p$-adic absolute value can be extended to a finite normal extension $K$ over $\mathbb{Q}_p$ of degree  $n$. 
For $\alpha\in K$, define $\|\alpha\|_p$ as the $n$-th root of the determinant of the matrix of linear transformation from the vector space $K$ over $\mathbb{Q}_p$ to itself defined by $x\mapsto \alpha x$ for all $x\in K$. 
Also, $\nu_p(\alpha)$ is the unique rational number satisfying $\|\alpha\|_p=p^{-\nu_p(\alpha)}$.
\par We next state a few results which will be used in the proof of our theorems.
\begin{lem}\label{lem1}\cite[Lemma 2.1]{garciaetal}
	If $S$ is dense in $\mathbb{Q}_p$, then for each finite value of the $p$-adic valuation, there is an element of $S$ with that valuation.
\end{lem}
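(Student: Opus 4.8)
The plan is to exploit the fact that the $p$-adic valuation is locally constant on $\mathbb{Q}_p\setminus\{0\}$, which is a direct consequence of the ultrametric (strong triangle) inequality. Concretely, I would fix a finite valuation value, that is, an arbitrary integer $k$, and single out a point of $\mathbb{Q}_p$ already known to have valuation $k$, the simplest choice being $p^k$, for which $\nu_p(p^k)=k$ and $\|p^k\|_p=p^{-k}$.

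First I would show that any element of $\mathbb{Q}_p$ lying strictly within distance $p^{-k}$ of $p^k$ must itself have valuation exactly $k$. Indeed, suppose $s\in\mathbb{Q}_p$ satisfies $\|s-p^k\|_p<p^{-k}=\|p^k\|_p$. Writing $s=p^k+(s-p^k)$ and applying the sharpened form of the ultrametric inequality, namely that $\|a+b\|_p=\max\{\|a\|_p,\|b\|_p\}$ whenever $\|a\|_p\neq\|b\|_p$, we obtain $\|s\|_p=\|p^k\|_p=p^{-k}$, and hence $\nu_p(s)=k$. This local constancy of $\nu_p$ is the only real content of the lemma.

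Next I would invoke the hypothesis. Since $S$ is dense in $\mathbb{Q}_p$, the open ball $B=\{y\in\mathbb{Q}_p:\|y-p^k\|_p<p^{-k}\}$, being nonempty and open, contains some element $s\in S$. By the preceding step, $\nu_p(s)=k$, so $S$ contains an element of valuation $k$. As $k$ was an arbitrary integer, that is, an arbitrary finite value of $\nu_p$, the conclusion follows.

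I do not anticipate a genuine obstacle, as the statement is essentially a restatement of the continuity (local constancy) of $\nu_p$ away from $0$. The one point requiring care is the strict inequality defining the ball $B$: one must remain strictly inside radius $\|p^k\|_p$ so that the sharpened ultrametric equality applies and the valuation is prevented from jumping.
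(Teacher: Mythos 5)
Your proof is correct and is essentially the standard argument for this lemma, which the paper itself does not prove but imports by citation from Garcia et al.\ \cite[Lemma 2.1]{garciaetal}: there, too, one takes the target valuation $k$, picks $s\in S$ with $\|s-p^k\|_p<p^{-k}$ by density, and concludes $\nu_p(s)=k$ from the isosceles-triangle property of the ultrametric. Your emphasis on the strictness of the inequality defining the ball is exactly the right point of care, since at distance equal to $p^{-k}$ the valuation could indeed jump.
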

\begin{lem}\label{lem2}\cite[Lemma 2.3]{garciaetal}
	Let $A\subset\mathbb{N}$.
	\begin{enumerate}
		\item 	If $A$ is $p$-adically dense in $\mathbb{N}$, then $R(A)$ is dense in $\mathbb{Q}_p$.
		\item 	If $R(A)$ is $p$-adically dense in $\mathbb{N}$, then $R(A)$ is dense in $\mathbb{Q}_p$.
	\end{enumerate}
\end{lem}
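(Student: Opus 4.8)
The plan is to reduce both parts to one elementary observation and then handle separately the only genuine difficulty, namely reaching $p$-adic targets of negative valuation. First I would record that for any subset $S\subseteq\mathbb{Q}_p$, being $p$-adically dense in $\mathbb{N}$ is equivalent to being dense in the ring of $p$-adic integers $\mathbb{Z}_p$. This is because $\mathbb{N}$ is itself dense in $\mathbb{Z}_p$: every $\alpha\in\mathbb{Z}_p$ is the $p$-adic limit of the non-negative integers $\sum_{i=0}^{N}c_ip^i$ coming from its digit expansion, so $\overline{\mathbb{N}}=\mathbb{Z}_p$, whence $\mathbb{N}\subseteq\overline{S}$ forces $\mathbb{Z}_p=\overline{\mathbb{N}}\subseteq\overline{S}$. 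Consequently the hypothesis of (1) says that $A$ is dense in $\mathbb{Z}_p$, and the hypothesis of (2) says that $R(A)$ is dense in $\mathbb{Z}_p$. Since $\mathbb{Q}_p=\bigcup_{k\geq0}p^{-k}\mathbb{Z}_p$, in both parts the entire task is to promote density in the unit ball $\mathbb{Z}_p$ to density in all of $\mathbb{Q}_p$; this is the step I expect to be the main obstacle, and the two parts overcome it by different devices.

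For part (1), fix $\xi\in\mathbb{Q}_p$ with $\xi\neq0$, write $v=\nu_p(\xi)$, and let $\epsilon>0$ (the case $\xi=0$ being similar and easier, by taking $a\in A$ of large valuation over a fixed $b$). The idea is to clear the negative valuation using the denominator. Because $A$ is dense in $\mathbb{Z}_p$ one can prescribe any residue class, hence produce $b\in A$ with $\nu_p(b)=w_0:=\max(0,-v)$; then $\xi b\in\mathbb{Z}_p$, so density of $A$ again yields $a\in A$ with $\|a-\xi b\|_p$ arbitrarily small. The identity
\[
\left\|\frac{a}{b}-\xi\right\|_p=\frac{\|a-\xi b\|_p}{\|b\|_p}=p^{w_0}\|a-\xi b\|_p
\]
then shows that choosing $\|a-\xi b\|_p<\epsilon\,p^{-w_0}$ places $a/b\in R(A)$ within $\epsilon$ of $\xi$, proving $R(A)$ is dense in $\mathbb{Q}_p$.

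For part (2) the new ingredient is that $R(A)$ is stable under $x\mapsto 1/x$, since $(a/b)^{-1}=b/a\in R(A)$; this is what lets me reach valuations below zero. Given $\xi\in\mathbb{Q}_p^{\times}$ and $\epsilon>0$, if $\nu_p(\xi)\geq0$ then $\xi\in\mathbb{Z}_p$ and density of $R(A)$ in $\mathbb{Z}_p$ finishes the job directly. If $\nu_p(\xi)<0$, set $s=1/\xi\in p\mathbb{Z}_p$ and choose $r\in R(A)$ with $\|r-s\|_p<\delta$. For $\delta<\|s\|_p$ the ultrametric inequality gives $\|r\|_p=\|s\|_p\neq0$, so $r\neq0$, its reciprocal $1/r$ again lies in $R(A)$, and
\[
\left\|\frac{1}{r}-\xi\right\|_p=\left\|\frac{1}{r}-\frac{1}{s}\right\|_p=\frac{\|r-s\|_p}{\|r\|_p\,\|s\|_p}=\frac{\|r-s\|_p}{\|s\|_p^{2}}.
\]
Taking $\delta$ smaller than both $\|s\|_p$ and $\epsilon\|s\|_p^{2}$ makes $1/r\in R(A)$ lie within $\epsilon$ of $\xi$, completing the proof.

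The crux throughout is that the hypotheses only deliver approximation inside $\mathbb{Z}_p$, and the real work is in escaping it: in (1) by scaling the denominator to a sufficiently high valuation, and in (2) by inverting a good approximation of $1/\xi$. The ultrametric property is what keeps the error estimates clean—in particular the equality $\|r\|_p=\|s\|_p$ for $r$ close to $s$—and guarantees that the reciprocal used in part (2) is well defined.
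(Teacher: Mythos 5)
Your proof is correct, and it follows essentially the same route as the source the paper cites for this lemma (Garcia et al., \emph{$p$-adic quotient sets}, Lemma 2.3): reduce both hypotheses to density in $\mathbb{Z}_p$ via the density of $\mathbb{N}$ in $\mathbb{Z}_p$, then pass to $\mathbb{Q}_p$ in part (1) by approximating numerator and a denominator of prescribed valuation separately, and in part (2) by exploiting that $R(A)$ is closed under inversion together with the ultrametric estimate $\|r\|_p=\|s\|_p$ for $r$ close to $s$. The only difference is cosmetic: the cited proof phrases the argument via sequences and continuity of division, while you give explicit $\epsilon$--$\delta$ ultrametric bounds.
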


\begin{thm}\label{thm11}\cite[Theorem 1]{brum}
	Let $\alpha_1,\dots,\alpha_n$ be units in $\Omega_p$, the completion of algebraic closure of $\mathbb{Q}_p,$ which are algebraic over the rationals $\mathbb{Q}$ and whose $p$-adic logarithms are linearly independent over $\mathbb{Q}$. 
	These logarithms are then linearly independent over the algebraic closure of $\mathbb{Q}$ in $\Omega_p$.
\end{thm}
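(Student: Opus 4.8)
The plan is to argue by contradiction, transplanting the Gel'fond--Baker method of transcendence theory to the field $\Omega_p$. Write $\lambda_i=\log_p\alpha_i$ for $1\le i\le n$; by hypothesis $\lambda_1,\dots,\lambda_n$ are linearly independent over $\mathbb{Q}$. Suppose, to the contrary, that they are linearly \emph{dependent} over $\overline{\mathbb{Q}}\cap\Omega_p$, so that $\sum_{i=1}^n\beta_i\lambda_i=0$ for some algebraic numbers $\beta_1,\dots,\beta_n$, not all zero. After relabelling we may assume $\beta_n\neq0$ and, dividing through, rewrite the relation as $\lambda_n=\beta_1\lambda_1+\cdots+\beta_{n-1}\lambda_{n-1}$. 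Fix a number field $K\subset\Omega_p$ containing all the $\alpha_i$ and $\beta_i$, and work with the extension of $\nu_p$ to $K$ recalled above.

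First I would construct an auxiliary $p$-adic analytic function of $n-1$ variables, of the Baker shape
\[
\Phi(z_1,\dots,z_{n-1})=\sum_{\mu}P(\mu)\prod_{j=1}^{n-1}\exp_p\!\big((\mu_j+\mu_n\beta_j)z_j\lambda_j\big),
\]
where the multi-index $\mu=(\mu_1,\dots,\mu_n)$ ranges over $0\le\mu_i\le L$ and the coefficients $P(\mu)\in K$ are to be determined. The exponents are arranged, using $\lambda_n=\sum_j\beta_j\lambda_j$, so that $\Phi$ and all its partial derivatives up to a prescribed order, evaluated at integer points $(m_1,\dots,m_{n-1})$, lie in $K$. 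By the $p$-adic form of Siegel's lemma --- a pigeonhole solution of a homogeneous linear system with more unknowns than equations, with control on the $p$-adic sizes of the coefficients --- one may choose the $P(\mu)$ not all zero so that these derivatives vanish on a large initial block of integer points. Throughout, the variables are rescaled so that the arguments of $\exp_p$ stay inside its disk of convergence, of radius $p^{-1/(p-1)}$, and one tracks the $p$-adic valuations of the factorials and binomial coefficients that occur.

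The decisive step is the \emph{extrapolation}, in which the vanishing of $\Phi$ and its derivatives is propagated from the initial block to a much larger set of integer points. In the archimedean original this is carried out by an inductive application of the maximum-modulus principle together with Hermite interpolation; since $\Omega_p$ admits no maximum principle, I would instead use $p$-adic analytic function theory --- a Schnirelman-integral analogue of Cauchy's formula, together with Strassmann and Newton-polygon estimates --- to bound $\|\Phi\|_p$ on a disk in terms of its values at the construction points, the interpolation factors $\prod_m(z-m)$ supplying the extra $p$-adic decay that forces the new vanishing. Iterating this produces a low-order derivative of $\Phi$ at some construction point equal to a \emph{nonzero} element $\gamma\in K$; the nonvanishing is supplied by a $p$-adic zero estimate, and this is exactly where the $\mathbb{Q}$-linear independence of $\lambda_1,\dots,\lambda_n$ is used, to rule out too many coincidental zeros. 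The extrapolation then forces $\nu_p(\gamma)$ to be very large, whereas a Liouville-type inequality for the $p$-adic valuation of a nonzero element of $K$ (coming from the product formula and the extension of $\nu_p$ recalled above) bounds $\nu_p(\gamma)$ from above; balancing $L$ against the order of vanishing makes these incompatible, which is the desired contradiction. The main obstacle is precisely this extrapolation in the absence of a $p$-adic maximum principle: estimating $\|\Phi\|_p$ while keeping every argument within the radius of convergence of $\exp_p$ as the exponents grow is the delicate heart of the argument, and is where $p$-adic analysis must play the role that complex analysis plays in Baker's proof.
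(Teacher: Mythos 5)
This statement is not proved in the paper at all: it is Brumer's theorem, quoted verbatim from [Theorem 1] of Brumer's 1967 paper and used as a black box in the proof of Theorem \ref{thm5}. So there is no in-paper argument to compare against; the relevant benchmark is Brumer's own proof, which is indeed the $p$-adic transplantation of Baker's method that you describe --- an auxiliary function with exponents $(\mu_j+\mu_n\beta_j)z_j\lambda_j$ built from the assumed relation $\lambda_n=\sum_j\beta_j\lambda_j$, coefficients from a Siegel-lemma pigeonhole, extrapolation, a zero/nonvanishing step, and a Liouville-type lower bound for a nonzero element of the number field $K$. In that sense your plan identifies the correct skeleton of the actual proof.

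But as it stands it is a program, not a proof: every load-bearing step --- the parameter choices, the extrapolation inequality, and above all the nonvanishing step where the $\mathbb{Q}$-linear independence of $\lambda_1,\dots,\lambda_n$ must actually be used --- is named rather than carried out, and these are exactly the places where the $p$-adic case needs care. Moreover, your stated ``main obstacle'' is misdiagnosed. Non-archimedean analysis \emph{does} have a maximum modulus principle: for $f(z)=\sum_n a_nz^n$ analytic on a closed disk of radius $r$ in $\Omega_p$, the sup of $\lVert f\rVert_p$ equals the Gauss norm $\max_n \lVert a_n\rVert_p\, r^n$, and Brumer's extrapolation runs on precisely this (a non-archimedean Schwarz lemma via the interpolation factors $\prod_m(z-m)$); no Schnirelman-integral substitute is needed. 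The genuine $p$-adic difficulty is a different one: $\exp_p$ converges only on the disk of radius $p^{-1/(p-1)}$, so, unlike in Baker's complex argument, one cannot extrapolate to larger and larger disks --- the domain is fixed once and for all (after replacing the $\alpha_i$ by suitable powers so that the logarithms are small), and the gain must come from increasing the order of vanishing at points inside that fixed disk, with a correspondingly different balance of parameters. Your sketch gestures at rescaling but never confronts this; until the extrapolation inequality is actually proved within a fixed disk and the zero estimate is supplied, the proposal has a genuine gap, even though its overall strategy matches Brumer's.
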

\section{Proof of the theorems}
We first prove Theorem \ref{thm5}.
\begin{proof}[Proof of Theorem \ref{thm5}]
		Let $p(x)=x^k-a_1x^{k-1}-a_2x^{k-2}-\dots-a_{k-1}x-1$ be the characteristic polynomial. Let $\alpha_1,\dots,\alpha_{k}$ be the $k$ distinct roots of the characteristic polynomial in its splitting field, say, $K$ over $\mathbb{Q}_p$. 
		The generating function of the sequence is  
	\begin{align*}
		t(x)=\frac{x^{k-1}}{1-a_1x-a_2x^2-\dots-x^k}=\sum_{i=1}^k\frac{1}{q(\alpha_i)}\sum_{n=0}^{\infty}\alpha_i^nx^n,
	\end{align*}
	where $q(x):=p'(x)$, the derivative of the polynomial $p(x)$. Hence, the $n$th term of the sequence is given by
	\begin{align*}
		x_n=\sum_{i=1}^{k}\frac{1}{q(\alpha_i)}\alpha_i^n,n\geq0.
	\end{align*}
	Since $p(0)=-1$, the roots  of $p(x)$ are units in the ring formed by elements in $K$ with $p$-adic absolute value less than or equal to one.
	Following Sanna's proof of \cite[Theorem 1.2]{Sanna1}, we can choose an even $t\in \mathbb{N}$ such that the function defined as
	\begin{align*}
		G(z):=\sum_{i=1}^{k}\frac{1}{q(\alpha_i)}\exp_p(z\log_p(\alpha_i^t))
	\end{align*}
	is analytic over $\mathbb{Z}_p$ and the Taylor  series of $G(z)$ around $0$ converges for all $z\in \mathbb{Z}_p$.  Also, note that $x_{nt}=G(n)$ for $n\geq0$. 
	\par
	We now use a variant of the following lemma which gives the multiplicative independence of any $k-1$ roots among the $k$ roots $\alpha_1,\dots,\alpha_k$ of  the characteristic polynomial $x^k-x^{k-1}-\dots-x-1$ of the $k$-generalized Fibonacci sequence in the field of complex numbers.
	\begin{lem}\label{lem5}\cite[Lemma 1]{fuchs}
		Each set of $k-1$ different roots $\alpha_1,\dots,\alpha_{k-1}$ is multiplicatively independent, that is, $\alpha_1^{e_1}\dots\alpha_{k-1}^{e_{k-1}}=1$ for some integers $e_1,\dots,e_{k-1}$ if and only if $e_1=\dots=e_{k-1}=0$.
	\end{lem}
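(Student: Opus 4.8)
The plan is to prove the statement for the concrete $k$-bonacci polynomial $\psi_k(x)=x^k-x^{k-1}-\dots-x-1$ by combining the classical geometry of its roots with a Galois/valuation argument. First I would record the standard facts about $\psi_k$: it is irreducible over $\mathbb{Q}$ (hence separable, so $\alpha_1,\dots,\alpha_k$ are distinct and each has degree $k$), it has a unique root $\alpha_1$ with $|\alpha_1|>1$ while $|\alpha_i|<1$ for $2\le i\le k$ (so $\alpha_1$ is a Pisot number), and the product of its roots equals $(-1)^{k+1}=\pm1$. In particular every $\alpha_i$ is an algebraic integer of absolute norm $1$, i.e.\ a unit, and the valuation vector $\mathbf{v}:=(\log|\alpha_1|,\dots,\log|\alpha_k|)$ satisfies $\sum_i\log|\alpha_i|=0$ with $\log|\alpha_1|>0>\log|\alpha_i|$ for $i\ge2$; thus $\mathbf{v}$ is a nonzero vector lying in the hyperplane $W:=\{\mathbf{x}\in\mathbb{R}^k:\sum_i x_i=0\}$. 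I would cite the known irreducibility and Pisot results for $\psi_k$ rather than reprove them.

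Next I would reformulate multiplicative independence in terms of $\mathbf{v}$ and the Galois action. Let $K$ be the splitting field of $\psi_k$ and $G=\operatorname{Gal}(K/\mathbb{Q})$, which embeds in $S_k$ as a transitive group via its action on the roots. Suppose, for contradiction, that some set of $k-1$ roots is multiplicatively dependent; writing the omitted index as $j$, this is a vector $\mathbf{e}=(e_1,\dots,e_k)\in\mathbb{Z}^k$ with $e_j=0$, $\mathbf{e}\neq\mathbf{0}$, and $\prod_{i}\alpha_i^{e_i}=1$. Applying each $\sigma\in G$ to this identity (identifying $\sigma$ with the permutation it induces, so $\sigma(\alpha_i)=\alpha_{\sigma(i)}$) and taking $\log|\cdot|$ gives
\begin{equation*}
\langle \mathbf{v},\ \sigma\mathbf{e}\rangle=\sum_{i}e_i\log|\alpha_{\sigma(i)}|=0\qquad\text{for every }\sigma\in G,
\end{equation*}
where $\sigma$ acts on $\mathbb{R}^k$ by permuting coordinates. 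Because the $\alpha_i$ are units, the finite places contribute nothing, so these archimedean relations are the only constraints, and they say precisely that $\mathbf{v}$ is orthogonal to the $G$-invariant subspace $V_{\mathbf{e}}:=\operatorname{span}_{\mathbb{R}}\{\sigma\mathbf{e}:\sigma\in G\}$.

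The final step is to turn this orthogonality into a contradiction by a representation-theoretic argument. I would show that the $G$-orbit of $\mathbf{v}$ already spans all of $W$: since the $k$-bonacci polynomial has Galois group $S_k$ (in particular $G$ is $2$-transitive), the standard module $W$ is an irreducible $G$-module, and as $\mathbf{v}\in W$ is nonzero its $G$-span must be the whole of $W$. Decomposing $\mathbf{e}=c(1,\dots,1)+\mathbf{w}$ with $\mathbf{w}\in W$, the vector $\mathbf{w}$ lies in the $G$-invariant space $V_{\mathbf{e}}$; if $\mathbf{w}\neq\mathbf{0}$ then $V_{\mathbf{e}}\cap W$ is a nonzero $G$-submodule of the irreducible $W$, hence equals $W$, which forces $\mathbf{v}\in V_{\mathbf{e}}$ and contradicts $\mathbf{v}\perp V_{\mathbf{e}}$ (as $\mathbf{v}\neq\mathbf{0}$). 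Therefore $\mathbf{w}=\mathbf{0}$ and $\mathbf{e}=c(1,\dots,1)$; the constraint $e_j=0$ gives $c=0$, so $\mathbf{e}=\mathbf{0}$, the desired contradiction.

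The \emph{main obstacle} is the input used in the last step, namely that the Galois orbit of the valuation vector spans $W$ — equivalently, that $G$ is large enough (say $2$-transitive). This rests on the nontrivial fact that $\psi_k$ is irreducible with full symmetric Galois group $S_k$, so the cleanest route is to invoke the established irreducibility and Galois-group results for the $k$-bonacci polynomial together with its Pisot property (which guarantees that $\mathbf{v}$ is a genuinely non-constant sum-zero vector, hence nonzero in $W$). If one wished to avoid the full strength of $G=S_k$, it would suffice to verify directly that $\mathbf{v}$ has nonzero projection onto every irreducible constituent of $W$; establishing this uniformly in $k$ without knowing $G$ precisely is exactly the delicate point.
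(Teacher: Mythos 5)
Your proof is essentially correct, but it takes a genuinely different route from the one the paper relies on. The paper does not prove this lemma at all: it cites Fuchs, Hutle, Luca and Szalay, and the way the authors later transport that proof to the cubic $x^3-ax^2-bx-1$ (in the proof of Theorem 1.3) reveals what the cited argument actually uses — only transitivity of the Galois action (from irreducibility), the fact that the product of all $k$ roots has absolute value $1$, and the sign pattern $\log|\alpha_1|>0>\log|\alpha_i|$ for $i\geq 2$. Concretely: given a relation $\prod_{i\neq j}\alpha_i^{e_i}=1$, pick $i_0$ with $|e_{i_0}|$ maximal and an automorphism sending $\alpha_{i_0}$ to the dominant root; taking $\log|\cdot|$ and using $\sum_i\log|\alpha_i|=0$ exhibits $|e_{i_0}|\log|\alpha_1|$ as bounded by $|e_{i_0}|$ times a sum of $k-2$ of the $k-1$ positive quantities $-\log|\beta|$ (over non-dominant roots $\beta$), which is \emph{strictly} less than $|e_{i_0}|\log|\alpha_1|$ precisely because one root is missing from the relation — forcing $e_{i_0}=0$. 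Your representation-theoretic argument ($\mathbf{v}\perp\sigma\mathbf{e}$ for all $\sigma$, irreducibility of the standard module $W$ under a $2$-transitive group, hence $\mathbf{e}$ is constant, hence zero) is clean and proves something stronger, but it buys this at the cost of the nontrivial input $G=S_k$ (or at least $2$-transitivity), which the elementary estimate never needs — and this difference is not cosmetic for this paper, since Theorem 1.3 adapts the lemma's proof to cubics whose Galois group may be cyclic of order $3$, where your stated justification is unavailable while the sign-pattern proof transfers verbatim (that is exactly the role of the hypothesis $(a+b)(b-a-2)<0$). For $k=3$ your argument happens to survive because even $A_3$ acts $\mathbb{R}$-irreducibly on the sum-zero plane, but that is an accident of $k=3$, not a feature of your method as written.

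Two small points to tighten: you assert $\mathbf{w}\in V_{\mathbf{e}}$ without justification — it follows in one line by averaging $\sigma\mathbf{e}$ over $G$ (transitivity gives $c(1,\dots,1)\in V_{\mathbf{e}}$, hence $\mathbf{w}=\mathbf{e}-c(1,\dots,1)\in V_{\mathbf{e}}$) or by complete reducibility, since the trivial module and $W$ are non-isomorphic; and the remark that ``the finite places contribute nothing, so these archimedean relations are the only constraints'' is unnecessary (the archimedean relations are consequences of the multiplicative relation; no completeness of constraints is needed) and should be dropped.
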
 
	Let  $\sigma(\alpha_1)$ be the $\pm$ Pisot root with absolute value greater than 1 and other roots, $\sigma(\alpha_2),\dots,\sigma(\alpha_k)$  with absolute values less than $1$, 
	where, $\sigma$ is an isomorphism from $\mathbb{Q}(\alpha_1,\dots,\alpha_k)$ to the splitting field of $p(x)$ over $\mathbb{Q}$ in the field of complex numbers. 
	Therefore, the proof of Lemma \ref{lem5} holds true for the roots of $p(x)$, which are $\sigma(\alpha_1),\dots, \sigma(\alpha_k)$,  since $\log|\sigma(\alpha_1)|$ is positive and $\log|\sigma(\alpha_2)|,\dots,\log|\sigma(\alpha_k)|$ are negative. 
	Hence, $\sigma(\alpha_1),\dots,\sigma(\alpha_{k-1}$) are  multiplicatively independent, implying that $\alpha_1^t,\dots, \alpha_{k-1}^t$ are multiplicatively independent. 
	Thus, $\log_p(\alpha_1^t),\dots,\log_p(\alpha_{k-1}^t)$ are linearly independent over $\mathbb{Z}$ and hence linearly independent over algebraic numbers by Theorem \ref{thm11}.
	\par
	If
	$G'(0)=\sum_{i=0}^{k}\frac{1}{q(\alpha_i)}\log_p(\alpha_i^t)=0$,
	we obtain
	\begin{align*}
		\sum_{i=1}^{k-1}	\Big(\frac{1}{q(\alpha_i)}-\frac{1}{q(\alpha_k)}\Big)\log_p(\alpha_i^t)=0
	\end{align*}
	since  $\log_p(\alpha_k^t)=-\log_p(\alpha_1^t)-\dots-\log_p(\alpha_k^t)$ as product of the roots is $-1$ and $t$ is even.
	By linear independence of $\log_p(\alpha_1^t),\dots,\log_p(\alpha_{k-1}^t)$, $\frac{1}{q(\alpha_1)}=\dots=\frac{1}{q(\alpha_k)}=c$, for some $p$-adic number $c$. This gives $k$ distinct
	roots $\alpha_1,\dots,\alpha_k$ of the $k-1$ degree  polynomial $q(x)-\frac{1}{c}$, which is not possible.
 Therefore, $G'(0)\neq0$.
	Since \[G(z)=\sum_{j=0}^\infty\frac{G^{(j)}(0)}{j!}z^j\] converges at $z=1$, so
	$\parallel\frac{G^{(j)}(0)}{j!}\parallel_p\rightarrow 0$. Hence, there exists an integer $\ell$ such that $\nu_p(G^{(j)}(0)/j!)\geq-\ell$ for all $j$.
	Thus, we obtain $G(mp^h)=G'(0)mp^h+d$ such that $\nu_p(d)\geq 2h-\ell$ for all $m,h\geq0$. Also, $G(0)=0$ for $h>h_0:=\ell+\nu_p(G'(0))$, and hence we have 
	\begin{align*}
		\nu_p\Big(	\frac{G(mp^h)}{G(p^h)}-m\Big)\geq h-h_0.
	\end{align*}
	This yields
	\[\lim_{h\rightarrow\infty}\left\lVert\frac{G(mp^h)}{G(p^h)}-m\right\rVert_p=0,\]
	and hence $R(G(n)_{n\geq 0})$ is $p$-adically dense in $\mathbb{N}$.
	Since $x_{nt}=G(n),n\geq0$, we find that  $R((x_n)_{n\geq 0})$ is also $p$-adically dense in $\mathbb{N}$.
	Therefore, by Lemma \ref{lem2},  $R((x_n)_{n\geq 0})$ is dense in $\mathbb{Q}_p$.
\end{proof}
\begin{proof}[Proof of Corollary \ref{cor5}]
	Since $p(1)p(-1)=(-a-b)(b-a-2)>0$ and $p(0)=-1$, by continuity of the polynomial function in $\mathbb{R}$, $p(x)$ has one real root with absolute value greater than 1 and other two roots  with absolute values less than $1$. 
	Hence, the characteristic polynomial has a $\pm$ Pisot root, and the corollary follows from Theorem \ref{thm5}.
\end{proof}
We will now prove Theorem \ref{thm3}. We need the following corollary to prove Theorem \ref{thm3}.
\begin{cor}\label{cor1}\cite[Corollary 2.2]{cirnu}
	The linear recurrence relation $x_{n+1}=x_n+2x_{n-1}+\dots+nx_1+(n+1)x_0,n\geq 0$ with the initial data $x_0=1$ has the solution $x_n=\frac{1}{\sqrt{5}}\left({(\frac{3+\sqrt{5}}{2})}^n-{(\frac{3-\sqrt{5}}{2})}^n\right),n\geq 1$.
\end{cor}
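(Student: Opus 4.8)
The plan is to collapse this ``full-history'' recurrence into an ordinary second-order linear recurrence by differencing twice, and then to apply the standard characteristic-root method. First I would rewrite the defining relation in the symmetric form $x_{n+1}=\sum_{m=0}^{n}(n+1-m)x_m$, valid for $n\geq 0$. Subtracting the analogous expression for $x_n$ causes every weight to drop by exactly one, so all the higher coefficients cancel and I obtain the telescoped identity
\[
x_{n+1}-x_n=\sum_{m=0}^{n}x_m,\qquad n\geq 1.
\]
Writing this same identity at index $n-1$ and subtracting eliminates the remaining sum, leaving
\[
x_{n+1}-2x_n+x_{n-1}=x_n,\quad\text{that is,}\quad x_{n+1}=3x_n-x_{n-1},\qquad n\geq 2.
\]

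Next I would solve this constant-coefficient recurrence. Its characteristic polynomial $t^2-3t+1$ has roots $\lambda_{\pm}=\tfrac{3\pm\sqrt5}{2}$, precisely the two bases appearing in the claimed formula, so the general solution is $x_n=A\lambda_+^n+B\lambda_-^n$. To pin down $A$ and $B$ I would compute the first two genuine values directly from the original definition, namely $x_1=x_0=1$ (the case $n=0$) and $x_2=x_1+2x_0=3$ (the case $n=1$). Since $\lambda_+-\lambda_-=\sqrt5$ and $\lambda_++\lambda_-=3$, solving the resulting $2\times 2$ system gives $A=1/\sqrt5$ and $B=-1/\sqrt5$, which is exactly the stated closed form for all $n\geq 1$.

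The one point requiring care---and the place where a naive argument goes astray---is the index bookkeeping. The second-order recurrence is only forced for $n\geq 2$, and the telescoping used to derive it genuinely fails at the bottom of the range; indeed the closed form evaluates to $0$ at $n=0$, whereas $x_0=1$. Thus I must seed the solved recurrence with $x_1$ and $x_2$ rather than with $x_0$, and verify one further term (for instance $x_3=3x_2-x_1=8$, matching $x_3=x_2+2x_1+3x_0$) to confirm the regime of validity. As an internal check it is worth observing that $\lambda_{\pm}=\varphi^{\pm 2}$ for the golden ratio $\varphi=\tfrac{1+\sqrt5}{2}$, so the formula is just the Binet expression for the even-indexed Fibonacci numbers $x_n=F_{2n}$; this identifies the answer independently and reassures that the constants are correct.
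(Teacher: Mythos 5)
Your proof is correct and complete, but note that there is nothing in the paper to compare it against: the paper imports this corollary verbatim from C\^{\i}rnu \cite{cirnu} and gives no proof of it, so your argument in effect supplies the justification the paper takes on citation. Your double-differencing reduction is the natural route (and is in the spirit of C\^{\i}rnu's general treatment of arithmetic-progression coefficients): the first difference gives $x_{n+1}-x_n=\sum_{m=0}^{n}x_m$ for $n\geq 1$, the second gives $x_{n+1}=3x_n-x_{n-1}$ for $n\geq 2$, and you handle the one genuinely delicate point correctly --- seeding with $x_1=1$, $x_2=3$ rather than with $x_0$, since the second-order relation fails at $n=1$ (it would force $x_2=3x_1-x_0=2\neq 3$) and the closed form evaluates to $0\neq 1=x_0$ at $n=0$, which is exactly why the statement is asserted only for $n\geq 1$. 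Your verification of $x_3=8$ is redundant (the derivation already validates the recurrence for all $n\geq 2$, and induction from the two seeds finishes it), but harmless. Your closing observation that $\lambda_{\pm}=\varphi^{\pm 2}$ and hence $x_n=F_{2n}$ is precisely the identification the paper itself makes and exploits immediately afterward in the proof of Theorem \ref{thm3}, so your independent check dovetails exactly with how the corollary is used.
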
	
\begin{proof}[Proof of Theorem \ref{thm3}]
	By Corollary \ref{cor1}, we have
	\begin{align*}
		 x_n&=\frac{1}{\sqrt{5}}\left({\left(\frac{3+\sqrt{5}}{2}\right)}^n-{\left(\frac{3-\sqrt{5}}{2}\right)}^n\right), n\geq1\\
		& =\frac{\alpha^{2n}-\beta^{2n}}{\sqrt{5}}\\
		&=F_{2n},
		\end{align*}
	   where $\alpha=\frac{1+\sqrt{5}}{2}, \beta=\frac{1-\sqrt{5}}{2}$, and $F_n$ denotes the $n$th Fibonacci number which is obtained by the Binet formula. We know by \cite{garcia-luca}, 
	   that the ratio set of Fibonacci numbers is dense in $\mathbb{Q}_p$ for all primes $p$. Therefore, by Lemma \ref{lem1}, $\nu_p(F_n)$ is not bounded. Hence, for any $j\in \mathbb{N}$, there exists $F_m$ such that $\nu_p(F_m)\geq j$, that is,
	 $\frac{\alpha^{m}-\beta^{m}}{\sqrt{5}}\equiv0 \pmod{p^j}$ which gives $\alpha^m\equiv\beta^m\pmod{p^j}$. This yields
	 \begin{align*}
	   \alpha^{2mp^{j-1}(p-1)}=(\alpha^m\alpha^m)^{p^{j-1}(p-1)}\equiv(\alpha^m\beta^m)^{p^{j-1}(p-1)}\pmod{p^j}.
	   	\end{align*}
	    Since $\alpha\beta=-1$, by using Euler's theorem, we find that 
	    \begin{align*}
	    \alpha^{2mp^{j-1}(p-1)}\equiv(\alpha^m\beta^m)^{p^{j-1}(p-1)}\equiv1\pmod{p^j}.
	    \end{align*}
	This gives $\alpha^{2k}\equiv\beta^{2k}\equiv1\pmod{p^j}$, where $k=mp^{j-1}(p-1)$. Hence,
	 \begin{align*}
	  \frac{x_{kn}}{x_{k}}=\frac{F_{2kn}}{F_{2k}}=\frac{(\alpha^{2k})^n-(\beta^{2k})^n}{\alpha^{2k}-\beta^{2k}}=(\alpha^{2k})^{(n-1)}+(\alpha^{2k})^{n-2}\beta^{2k}+\dots+(\beta^{2k})^{n-1},
	  \end{align*}
       which is congruent to $n$ modulo $p^{j}$.
	 Since for $n\in \mathbb{N}$, there exists $k\in \mathbb{N}$ such that $\parallel\frac{x_{kn}}{x_k}-n\parallel_p\leq p^{-j}$,  $R((x_n)_{n\geq 0})$ is $p$-adically dense in $\mathbb{N}$. Therefore, by Lemma \ref{lem2},  $R((x_n)_{n\geq 0})$ is dense in $\mathbb{Q}_p$.
\end{proof}
We now prove Theorem \ref{thm4}. We need the following results to prove Theorem \ref{thm4}.
\begin{thm}\label{thm10}\cite[Theorem 3.1]{cirnu}
	The numbers $x_n$ are solutions of the linear recurrence relation with constant coefficients in geometric progression $x_{n+1}=ax_n+aqx_{n-1}+\dots+aq^{n-1}x_1+aq^nx_0,n\geq 0$  with initial data $x_0$, if and only if they form the geometric progression given by the formula $x_n=ax_0(a+q)^{n-1},n\geq 1$.
	\end{thm}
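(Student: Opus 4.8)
The plan is to prove both implications of this equivalence after first recasting the recurrence in the compact convolution form $x_{n+1}=a\sum_{k=0}^{n}q^{\,n-k}x_k$ for $n\geq 0$, which in particular forces $x_1=ax_0$ from the case $n=0$. The key observation, and what makes the whole statement tractable, is that this convolution-type recurrence --- in which $x_{n+1}$ depends on \emph{all} previous terms --- can be collapsed to a two-term recurrence by a single telescoping step.

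For the forward implication, suppose the $x_n$ satisfy the recurrence. For $n\geq 1$ I would write down the relation for $x_{n+1}$ and the relation for $x_n$, multiply the latter by $q$, and subtract:
\begin{align*}
x_{n+1}-qx_n=a\sum_{k=0}^{n}q^{\,n-k}x_k-a\sum_{k=0}^{n-1}q^{\,n-k}x_k=ax_n.
\end{align*}
This yields $x_{n+1}=(a+q)x_n$ for all $n\geq 1$, so that $(x_n)_{n\geq 1}$ is a genuine geometric progression of ratio $a+q$. Combining this with the initial relation $x_1=ax_0$ gives at once $x_n=x_1(a+q)^{n-1}=ax_0(a+q)^{n-1}$ for $n\geq 1$, as claimed.

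For the reverse implication, I would substitute the proposed closed form $x_k=ax_0(a+q)^{k-1}$ (with $x_0$ as given) into the right-hand side $a\sum_{k=0}^{n}q^{\,n-k}x_k$ and check that it equals $x_{n+1}=ax_0(a+q)^{n}$. After factoring out $ax_0$, this reduces to the purely algebraic identity
\begin{align*}
(a+q)^{n}=q^{\,n}+a\sum_{k=1}^{n}q^{\,n-k}(a+q)^{k-1},
\end{align*}
which is nothing but the standard factorization $X^{n}-Y^{n}=(X-Y)\sum_{k=1}^{n}X^{k-1}Y^{\,n-k}$ specialized to $X=a+q$, $Y=q$, so that $X-Y=a$. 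Since this is a polynomial identity in $a$ and $q$, it holds with no restriction on the parameters.

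The main obstacle, such as it is, lies entirely in \emph{finding} the telescoping manipulation in the forward direction; once one sees that subtracting $q$ times the shifted relation annihilates all but the top term of the convolution, both directions are routine. The only points requiring a moment's care are the degenerate parameter values --- for instance $q=0$, $a+q=0$, or $x_0=0$ --- but because the final verification rests on a polynomial identity and the telescoping argument never divides by $q$ or by $a+q$, the formula $x_n=ax_0(a+q)^{n-1}$ and the equivalence remain valid in all of these cases.
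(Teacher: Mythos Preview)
Your proof is correct. Both implications are handled cleanly: the telescoping step $x_{n+1}-qx_n=ax_n$ for $n\geq 1$ is exactly the right reduction of the convolution recurrence to a first-order one, and the reverse verification via $X^n-Y^n=(X-Y)\sum_{k=1}^n X^{k-1}Y^{n-k}$ with $X=a+q$, $Y=q$ is valid as a polynomial identity, so no case analysis on degenerate parameters is needed.

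As for comparison with the paper: note that the paper under review does not supply a proof of this statement at all --- it merely quotes it as \cite[Theorem~3.1]{cirnu} and uses it as a black box in the one-line proof of Theorem~\ref{thm4}. So there is no ``paper's own proof'' to compare against here. Your argument is a complete and elementary proof of the cited result, and in fact the telescoping reduction you give is essentially the standard one (and is the approach in C\^{\i}rnu's original note).
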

\begin{lem}\label{lem3}\cite[Lemma 2.2]{garciaetal}
	If $A$ is a geometric progression in $\mathbb{Z}$, then $R(A)$ is not dense in any $\mathbb{Q}_p$.
\end{lem}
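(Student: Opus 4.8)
The plan is to first reduce the statement to an explicit description of the ratio set. Writing a geometric progression as $A=\{a_0 r^{\,n}: n\ge 0\}$ with $a_0\in\mathbb{Z}\setminus\{0\}$ and common ratio $r\in\mathbb{Q}^\times$ (if $r=0$ the progression is eventually trivial and $R(A)$ is finite), every quotient of two terms has the form $a_0 r^m/(a_0 r^n)=r^{m-n}$, and as $m,n$ range over the nonnegative integers the exponent $m-n$ ranges over all of $\mathbb{Z}$. Hence
$$R(A)=\{r^k:k\in\mathbb{Z}\},$$
the cyclic multiplicative subgroup of $\mathbb{Q}_p^\times$ generated by $r$. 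So the whole problem becomes: show that a cyclic subgroup $\langle r\rangle$ is never dense in $\mathbb{Q}_p$. I would organise the argument according to the value $d:=\nu_p(r)$, since $\nu_p(r^k)=kd$.

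When $d=0$ every element of $R(A)$ is a $p$-adic unit, so no element has valuation $1$; by Lemma \ref{lem1} a dense subset must realise every finite valuation, and therefore $R(A)$ cannot be dense. (This case also covers $r=\pm1$, where $R(A)$ is even finite.)

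When $d\ne 0$ the map $k\mapsto \nu_p(r^k)=kd$ is injective, so each finite valuation is attained by \emph{at most one} element of $R(A)$; in particular the valuation $d$ is attained only by $r^1=r$. I would then exhibit an explicit open ball that meets $R(A)$ nowhere. Take $y=r(1+p)$, so that $\nu_p(y)=d$ and $\|y-r\|_p=\|rp\|_p=p^{-(d+1)}$, and consider $B=\{z:\|z-y\|_p\le p^{-(d+2)}\}$. Every $z\in B$ satisfies $\nu_p(z)=d$ by the ultrametric inequality, so the only element of $R(A)$ that could lie in $B$ is $r$ itself; but $\|r-y\|_p=p^{-(d+1)}>p^{-(d+2)}$ places $r$ outside $B$. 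Thus $B$ is a nonempty open set disjoint from $R(A)$, so $R(A)$ is not dense. Combining the two cases proves the lemma for every prime $p$.

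I expect the only subtle point to be the case $|d|=1$: there every integer occurs as the valuation of some element of $R(A)$, so Lemma \ref{lem1} alone does not obstruct density and one genuinely needs the ball construction above (for $|d|\ge 2$ the valuation $1$ is already missing and Lemma \ref{lem1} suffices directly). Care is also needed at the degenerate cases $r\in\{0,\pm1\}$, but these yield finite or unit-only ratio sets and are immediate.
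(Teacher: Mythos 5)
Your proof is correct, and it is worth noting that the paper you are writing into does not prove this lemma at all: it is imported from Garcia et al.\ \cite[Lemma 2.2]{garciaetal}, so the only available comparison is with the argument in that source, which likewise begins from the reduction $R(A)=\{r^k : k\in\mathbb{Z}\}$ and rules out density of this cyclic group by valuation considerations. Your write-up is a complete, self-contained verification of that reduction, and its main merit is that it correctly isolates the one genuinely delicate case: when $|\nu_p(r)|=1$ every integer occurs as a valuation of some $r^k$, so the valuation criterion of Lemma~\ref{lem1} (which disposes of $\nu_p(r)=0$ and $|\nu_p(r)|\geq 2$ immediately) gives no obstruction, and one needs something like your explicit missed ball $B=\{z:\|z-r(1+p)\|_p\leq p^{-(d+2)}\}$, which works because every $z\in B$ has $\nu_p(z)=d$ by the ultrametric inequality, the only power of $r$ of valuation $d$ is $r$ itself, and $\|r-r(1+p)\|_p=p^{-(d+1)}$ exceeds the radius. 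Two small remarks: your ball argument in fact handles all $d\neq 0$ uniformly, so the case split is only needed to treat $d=0$ separately (where infinitely many powers share the valuation $0$ and the ``at most one element per valuation'' step fails, but Lemma~\ref{lem1} applies); and since an infinite geometric progression of integers forces $r\in\mathbb{Z}$, one always has $d\geq 0$, though your proof never needs this. The degenerate cases $r\in\{0,\pm 1\}$ are handled correctly as finite ratio sets.
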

\begin{proof}[Proof of Theorem \ref{thm4}] 
	By Theorem \ref{thm10},  $(x_n)_{n\geq 1}$ forms a geometric progression where $n$th term is $ax_0(a+r)^{n-1}$ for $n\geq1$. Hence, by Lemma \ref{lem3},  $R((x_n)_{n\geq 0})$ is not dense in $\mathbb{Q}_p$ for any prime $p$.
\end{proof}
We now prove Theorem \ref{thm7}. We need few results on uniform distribution of sequence of integers to prove Theorem \ref{thm7}. Recall that a sequence  $(x_n)_{n\geq 0}$ is said to be uniformly distributed modulo $m$ if each residue occurs equally often, that is, \[\lim_{N\rightarrow\infty}\frac{\# \{n\leq N|x
	_n\equiv t \pmod{m}\}}{N}=\frac{1}{m}\] for all $t\in\mathbb{Z}$. The following theorems will be used to prove Theorem \ref{thm7}.
\begin{prop}\label{prop1}\cite[Proposition 1]{bumby}
 Suppose $\left\langle G_n\right\rangle  $ be the sequence of integers determined by the recurrence relation $G_{n+1}=AG_n-BG_{n-1}$ with initial values $G_0,G_1$ where $A,B,G_0,G_1\in\mathbb{Z}$. 	If $A=2a,B=a^2$, then $\left\langle G_n\right\rangle $ is uniformly distributed modulo a prime $p$ if and only if $p\nmid a(G_1-aG_0)$.
	\end{prop}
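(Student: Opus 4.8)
The plan is to solve the recurrence explicitly and then analyze the residues $G_n \bmod p$, splitting into cases according to whether $p \mid a(G_1 - aG_0)$. The characteristic polynomial of $G_{n+1} = 2aG_n - a^2 G_{n-1}$ is $x^2 - 2ax + a^2 = (x-a)^2$, which has the repeated root $a$. Hence the general solution has the form $G_n = (c_1 + c_2 n)a^n$, and matching the initial data gives $c_1 = G_0$ and $c_2 a = G_1 - aG_0$. Writing $D := G_1 - aG_0$, I would record the closed form
\begin{align*}
G_n = a^n G_0 + n a^{n-1} D, \qquad n \ge 0,
\end{align*}
which is the computational backbone of the whole argument.

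Next I would dispose of the two degenerate cases, which together establish the ``only if'' direction. If $p \mid a$, then for every $n \ge 2$ both $a^n$ and $a^{n-1}$ vanish modulo $p$, so $G_n \equiv 0 \pmod p$; the residue $0$ then has density $1 > 1/p$, so $\langle G_n\rangle$ is not uniformly distributed. If instead $p \nmid a$ but $p \mid D$, the closed form collapses to $G_n \equiv a^n G_0 \pmod p$; since $a$ is a unit of multiplicative order $d \mid p-1$, the powers $a^n$ lie in a subgroup of size $d \le p-1$, so $G_n$ attains at most $p-1 < p$ distinct residues and some residue never occurs, again ruling out uniform distribution. As $p$ is prime, these two cases are precisely the complement of the condition $p \nmid a(G_1 - aG_0)$.

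For the ``if'' direction I would assume $p \nmid a$ and $p \nmid D$ and write $G_n \equiv a^{n-1}(aG_0 + nD) \pmod p$ (with $a^{-1}$ read mod $p$), a product of two factors with coprime periods: $a^{n-1} \bmod p$ is purely periodic of period $d = \operatorname{ord}_p(a)$, which divides $p-1$, while $aG_0 + nD \bmod p$ is periodic of period $p$. Thus $G_n \bmod p$ is purely periodic of period $dp$, and for a purely periodic sequence uniform distribution reduces to a counting statement over one period. By the Chinese Remainder Theorem the map $n \mapsto (n \bmod d,\, n \bmod p)$ is a bijection from $\mathbb{Z}/dp$ onto $(\mathbb{Z}/d)\times(\mathbb{Z}/p)$. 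Fixing $n \bmod d$ fixes $a^{n-1} \equiv u$ for some unit $u$, and as $n \bmod p$ runs over $\mathbb{Z}/p$ the linear form $aG_0 + nD$ runs over all residues exactly once (because $D$ is a unit), hence so does $u(aG_0 + nD) = G_n$. Summing over the $d$ choices of $n \bmod d$, each residue modulo $p$ occurs exactly $d$ times in a period of length $dp$, i.e.\ with frequency $1/p$, which is uniform distribution.

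I expect the main obstacle to be this ``if'' direction: one must recognize that the multiplicative period $d$ and the additive period $p$ are coprime, and then package the two independent periodicities through the Chinese Remainder Theorem together with the observation that multiplication by the unit $u$ permutes the residues modulo $p$. The closed-form derivation and the degenerate cases are routine, but some care is needed to confirm that $p \mid a(G_1 - aG_0)$ splits precisely into the two non-uniform cases handled above.
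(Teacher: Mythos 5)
Your proof is correct. Note that the paper itself offers no proof of this statement: it is quoted verbatim from Bumby (\emph{Proc. Amer. Math. Soc.} \textbf{50} (1975), Proposition 1), so there is no internal argument to compare against, and your writeup should be judged on its own. Your route --- the closed form $G_n = a^{n-1}(aG_0 + nD)$ with $D = G_1 - aG_0$ forced by the repeated root $a$ of $(x-a)^2$, the two degenerate cases ($p \mid a$ gives $G_n \equiv 0$ eventually; $p \nmid a$, $p \mid D$ traps $G_n$ in a coset of $\langle a \rangle$ of size at most $p-1$) for the ``only if'' direction, and the Chinese Remainder pairing of the coprime periods $d = \operatorname{ord}_p(a) \mid p-1$ and $p$ for the ``if'' direction --- is essentially the standard (and Bumby's) analysis of the equal-root case, in which the sequence modulo $p$ factors as a purely periodic unit $a^{n-1}$ times the linear orbit $aG_0 + nD$, which sweeps out all of $\mathbb{Z}/p$ exactly once per block precisely when $D$ is a unit; all the small technical points (interpreting $a^{-1}$ modulo $p$ at $n=0$, reducing uniform distribution of a periodic sequence to equidistribution over one period, and the fact that $p \mid aD$ splits exactly into your two degenerate cases since $p$ is prime) are handled correctly.
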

\begin{thm}\label{thm9}\cite[Theorem]{bumby}
Suppose $\left\langle G_n\right\rangle $ be the sequence of integers determined by the recurrence relation $G_{n+1}=AG_n-BG_{n-1}$ with initial values $G_0,G_1$ where $A,B,G_0,G_1\in\mathbb{Z}$. If $\left\langle G_n \right\rangle$ is uniformly distributed modulo $p$, then $\left\langle G_n \right\rangle$  is uniformly distributed modulo $p^h$ with $h>1$ iff: 
	\begin{enumerate}
		\item $p>3$;
		\item $p=3$ and $A^2\not\equiv B\pmod{9}$; or 
		\item $p=2, A\equiv2\pmod{4},B\equiv 1\pmod{4}$.
	\end{enumerate}
\end{thm}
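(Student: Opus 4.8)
The plan is to prove Bumby's criterion by induction on $h$, reducing uniform distribution modulo $p^h$ to uniform distribution modulo $p^{h-1}$ together with a single ``lifting'' condition, and then to translate that condition into the stated congruences on $A$ and $B$. Throughout I would encode the recurrence by its companion matrix
\[
M=\begin{pmatrix} A & -B\\ 1 & 0\end{pmatrix},\qquad \begin{pmatrix}G_{n+1}\\ G_n\end{pmatrix}=M^{\,n}\begin{pmatrix}G_1\\ G_0\end{pmatrix},
\]
so that $\det M=B$. The first observation is that the hypothesis ``$\left\langle G_n\right\rangle$ is uniformly distributed modulo $p$'' constrains the reduction of $x^2-Ax+B$ modulo $p$: it must have a repeated unit root $a$ (a split pair of roots makes the residues cluster rather than equidistribute), a point that is classical for odd $p$. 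In particular $p\nmid B$, so $M\in GL_2(\mathbb{Z}/p^h\mathbb{Z})$ and the sequence is \emph{purely} periodic modulo every $p^h$. Write $\pi_h$ for the period modulo $p^h$.

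The core of the argument is a fiber-lifting criterion. Since uniform distribution modulo $p^h$ requires uniform distribution modulo $p^{h-1}$, I would assume the latter: each class modulo $p^{h-1}$ is hit exactly $\pi_{h-1}/p^{h-1}$ times in a period of length $\pi_{h-1}$. Passing to modulus $p^h$, each such class splits into $p$ classes, and the positions with $G_n\equiv r\pmod{p^{h-1}}$ are spaced by $\pi_{h-1}$. Because $G_{n+\pi_{h-1}}\equiv G_n\pmod{p^{h-1}}$, the increment is $G_{n+\pi_{h-1}}-G_n=p^{h-1}u_n$ for some integer $u_n$, and the $p$ successive lifts $G_{n+j\pi_{h-1}}$ for $j=0,\dots,p-1$ run through an arithmetic progression with common difference $p^{h-1}u_n$ modulo $p^h$. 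These lifts exhaust the fiber over $r$ exactly when $u_n$ is a unit modulo $p$. I would prove: \emph{given uniform distribution modulo $p^{h-1}$, the sequence is uniformly distributed modulo $p^h$ if and only if $\pi_h=p\,\pi_{h-1}$ and $u_n$ is a unit modulo $p$ for the relevant indices.}

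It then remains to compute $u_n$ modulo $p$ and propagate it through the induction. Writing $M^{\pi_{h-1}}=I+p^{h-1}N_h\pmod{p^h}$ (valid since the period modulo $p^{h-1}$ is $\pi_{h-1}$), the increment vector is $v_{n+\pi_{h-1}}-v_n\equiv p^{h-1}N_h v_n\pmod{p^h}$, so $u_n$ is the lower coordinate of $N_h v_n$, and the lifting condition says that $N_h\bmod p$ is nondegenerate along the orbit $\{M^n v_0\}$. To relate successive levels I would use $M^{\pi_{h-1}}=(M^{\pi_1})^{\pi_{h-1}/\pi_1}$ and expand $(I+pN_2+p^2(\cdots))^{p^{h-2}}$ by the binomial theorem. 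The leading term is $p^{h-1}N_2$, so $N_h\equiv N_2\pmod p$ provided the cross terms $\binom{p^{h-2}}{2}p^2N_2^2+\cdots$ have $p$-adic valuation exceeding $h-1$. This bound is automatic when $p\geq5$, since then $\nu_p\!\big(\binom{p}{2}\big)=1$ and every correction gains an extra factor of $p$; the drift $N_2\not\equiv0$ propagates unchanged and yields uniform distribution modulo $p^h$ for all $h$. For $p=2,3$ the coefficient $\binom{p}{2}$ carries no saving, the quadratic correction $\tfrac{p-1}{2}p^2N_2^2$ can have valuation exactly $h-1$, and $N_h$ genuinely differs from $N_2$.

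The proof is then finished by explicit low-prime computation, which is also where I expect the main difficulty to lie. For $p=3$ I would compute $M^{\pi_1}\bmod 9$ directly in terms of $A,B$ and show that the corrected drift remains a unit precisely when $A^2\not\equiv B\pmod 9$; note that uniform distribution modulo $3$ already forces $A^2\equiv B\pmod 3$, so the condition is exactly that this congruence fails to lift to modulus $9$. For $p=2$ I would similarly evaluate $M^{\pi_1}\bmod 4$ and show that the drift is nondegenerate, and stable under all further lifts, exactly when $A\equiv2\pmod4$ and $B\equiv1\pmod4$. The genuine obstacle is that for $p=2,3$ the induction does not simply carry $N_2$ forward: one must simultaneously control the period growth $\pi_{h+1}/\pi_h\in\{1,p\}$ and the corrected drift, and show that the stated congruence is exactly the condition under which $\pi_h=p^{h-1}\pi_1$ holds for all $h$ with $u_n$ remaining a unit. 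A soft valuation argument does not suffice here, and the explicit mod-$9$ and mod-$4$ expansions of $M^{\pi_1}$ seem unavoidable.
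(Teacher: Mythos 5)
First, a point of reference: the paper does not prove this statement at all. Theorem 3.5 is imported verbatim from Bumby's 1975 paper (reference \cite{bumby}) and used as a black box in the proof of Theorem 1.6, so there is no internal proof to compare against; the relevant comparison is with Bumby's original argument, which, like your sketch, hinges on the double-root situation (compare Proposition 3.4, the canonical case $A=2a$, $B=a^2$) and handles $p=2,3$ separately. Judged on its own terms, your proposal is a reasonable skeleton but not a proof, and the gap sits exactly where the theorem lives. The entire content of conditions (2) and (3) --- that $A^2\not\equiv B\pmod 9$ is sufficient \emph{and} that $A^2\equiv B\pmod 9$ destroys uniform distribution modulo $9$, and likewise the mod-$4$ conditions at $p=2$ --- is deferred to ``explicit mod-$9$ and mod-$4$ expansions'' that you announce but never carry out; you even concede that your inductive mechanism breaks down there. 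Since for $p\geq 5$ the theorem imposes no condition, those undone computations plus the $p\geq5$ base step \emph{are} the theorem.

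Moreover, even the $p\geq5$ case is incomplete as written, and not merely unpolished. Your binomial expansion correctly propagates a good drift from level $2$ upward, but you never establish the level-$2$ input, and the condition you assert, $N_2\not\equiv 0\pmod p$, is the wrong condition. Since $N_2$ commutes with $M$, write $N_2\equiv xI+yE\pmod p$ with $E=M-aI$ nilpotent; using $G_n\equiv a^n(c+dn)\pmod p$ with $d$ a unit (forced by uniform distribution mod $p$), one gets $u_n\equiv a^n\bigl(x(c+dn)+ayd\bigr)\pmod p$. If $x\not\equiv0$, then $u_n\equiv 0$ for the indices $n$ solving $x(c+dn)+ayd\equiv0\pmod p$, so the fiber criterion fails even though $N_2\neq 0$. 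What must actually be proved is (i) $x\equiv 0$, which follows from $\det M^{\pi_1}=B^{\pi_1}\equiv 1\pmod{p^2}$ (a small argument you don't make), and (ii) $y\not\equiv 0$, i.e.\ $\pi_2=p\pi_1$, a genuine ``no Wieferich obstruction at level two'' statement that your proposal silently assumes. The clean route to (ii) is a lifting-the-exponent analysis of the eigenvalue ratio $\alpha/\beta=1+\theta$ in $\mathbb{Q}_p\bigl(\sqrt{A^2-4B}\bigr)$: in the ramified case $\nu_p(\theta)=\tfrac12$, and $(1+\theta)^p-1=p\theta+\dots+\theta^p$ has valuation exactly $\nu_p(\theta)+1$ when $p\geq5$, whereas for $p=3$ the terms $3\theta$ and $\theta^3$ share valuation $\tfrac32$ and can cancel --- precisely when $A^2\equiv B\pmod 9$. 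This is the mechanism your outline is missing both for closing the $p\geq5$ case and for making $p=3$ tractable. Two smaller gaps: the ``only if'' of your fiber-lifting criterion has a compensation loophole (occurrence positions $n_i$ with $u_{n_i}\equiv0$ concentrate their $p$ lifts on single classes, and you must rule out that these concentrations spread evenly over the fiber; the commuting structure $N_h\in\mathbb{F}_p[M]$ does this, but it needs saying), and at the base level you exclude only the split case, while the inert (irreducible mod $p$) case and the $p=2$ base structure also require an argument.
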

\begin{proof}[Proof of Theorem \ref{thm7}] Let $p$ be a prime. The given recurrence sequence $(x_n)_{n\geq 0}$ satisfies the hypotheses of Proposition \ref{prop1}, and hence $(x_n)_{n\geq 0}$ is uniformly distributed modulo $p$. If $p>3$, then by part (1) of Theorem \ref{thm9}, $(x_n)_{n\geq 0}$ is uniformly distributed modulo $p^k$ with $k>1$, that is, \[\lim_{N\rightarrow\infty}\frac{\# \{n\leq N|x
	_n\equiv t \pmod{p^k}\}}{N}=\frac{1}{p^k}>0.\] Therefore, for all $t\in \mathbb{N}$ and  for all $k>1$, there exists $x_n$ such that $\parallel x_n-t\parallel_p\leq p^{-k}$. Hence,  $R((x_n)_{n\geq 0})$ is $p$-adically dense in $\mathbb{N}$. Therefore, by Lemma \ref{lem2},  $R((x_n)_{n\geq 0})$ is dense in $\mathbb{Q}_p$. \par 
We next consider the remaining primes $p=2,3$. From the condition $p\nmid a(x_1-ax_0)$, we have $p\nmid a$. It is easy to check that $p=3$ satisfies the condition given in part (2) of Theorem \ref{thm9} and $p=2$ satisfies the condition given in part (3) of Theorem \ref{thm9}. The rest of the proof follows similarly as shown in the case of $p>3$. This completes the proof of the theorem.
\end{proof}
We now prove Theorem \ref{thm8}. We need the following lemma to prove Theorem \ref{thm8}.
\begin{lem}\label{lem4}\cite[Lemma 3.3]{Sanna2}
	Let $(r_n)_{n\geq0}$ be a linearly recurring sequence of order $k\geq2$ given by $r_n=a_1r_{n-1}+\dots+a_k r_{n-k}$ for each integer $n\geq k$, where $r_0,\dots,r_{k-1}$ and $a_1,\dots,a_k$ are all integers. Suppose that there exists a prime number $p$ such that $p\nmid a_k$ and $\min\{\nu_p(a_j):1\leq j<k\}>\max\{\nu_p(r_m)-\nu_p(r_n):0\leq m,n<k\}$. Then $\nu_p(r_n)=\nu_p(r_{n\pmod{k}})$ for each nonnegative integer $n$. 
\end{lem}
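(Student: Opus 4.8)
The plan is to prove the equality $\nu_p(r_n)=\nu_p(r_{n\bmod k})$ by strong induction on $n$, using the non-Archimedean (ultrametric) property of the $p$-adic valuation. Write $\mu:=\min\{\nu_p(a_j):1\leq j<k\}$ and $M:=\max\{\nu_p(r_m)-\nu_p(r_\ell):0\leq m,\ell<k\}$, so that the hypotheses read $\nu_p(a_k)=0$ together with $\mu>M$. For the base case $0\leq n<k$ there is nothing to prove, since then $n\bmod k=n$.

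For the inductive step, fix $n\geq k$, set $i:=n\bmod k$, and assume $\nu_p(r_m)=\nu_p(r_{m\bmod k})$ for every $m<n$. The idea is to show that in the defining relation $r_n=a_1r_{n-1}+\dots+a_kr_{n-k}$ the last summand $a_kr_{n-k}$ strictly dominates all the others in $p$-adic valuation. Since $(n-k)\bmod k=i$ and $p\nmid a_k$, the induction hypothesis gives $\nu_p(a_kr_{n-k})=\nu_p(r_{n-k})=\nu_p(r_i)$. For each index $1\leq j<k$, the induction hypothesis likewise yields $\nu_p(r_{n-j})=\nu_p(r_{(n-j)\bmod k})$, and hence $\nu_p(a_jr_{n-j})\geq \mu+\nu_p(r_{(n-j)\bmod k})$.

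The key inequality to verify is that each of these terms strictly exceeds $\nu_p(a_kr_{n-k})=\nu_p(r_i)$ in valuation, i.e.\ that $\mu+\nu_p(r_{(n-j)\bmod k})>\nu_p(r_i)$ for all $1\leq j<k$. Rearranging, this amounts to $\mu>\nu_p(r_i)-\nu_p(r_{(n-j)\bmod k})$, and the right-hand side is at most $M$ by the very definition of $M$ (both indices lie in $\{0,\dots,k-1\}$); since $\mu>M$ by hypothesis, the inequality holds. Thus $a_kr_{n-k}$ is the unique summand of minimal valuation, and the ultrametric equality for sums with a strictly dominant term forces $\nu_p(r_n)=\nu_p(a_kr_{n-k})=\nu_p(r_i)=\nu_p(r_{n\bmod k})$, completing the induction.

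The argument is essentially mechanical once this structure is laid out; the only delicate point — and the step I would be most careful about — is the bookkeeping that singles out $a_kr_{n-k}$ as the strictly dominant term. The hypothesis $p\nmid a_k$ is precisely what lets this term retain the full valuation $\nu_p(r_i)$ with no contribution from $a_k$, while the gap condition $\mu>M$ is exactly what pushes every remaining term to strictly higher valuation. Were the dominant and a shifted term to tie in valuation, the ultrametric conclusion could break down, so the entire proof hinges on the \emph{strictness} of $\mu>M$.
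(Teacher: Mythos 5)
Your proof is correct and follows essentially the same route as the source: the paper states this lemma without proof, quoting it from \cite{Sanna2}, and Sanna's argument there is the same strong induction on $n$ in which the hypotheses $p\nmid a_k$ and $\mu>M$ single out $a_kr_{n-k}$ as the unique summand of minimal $p$-adic valuation, whence the ultrametric equality yields $\nu_p(r_n)=\nu_p(r_{n\bmod k})$. Your closing remark correctly identifies the strictness of $\mu>M$ as the crux, so there is nothing to add.
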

\begin{proof}[Proof of Theorem \ref{thm8}] 
	By Lemma \ref{lem4}, we have
	\begin{align*}
		 \nu_p(x_n/x_m)=\nu_p(x_{n\pmod{k}})-\nu_p(x_{m\pmod{k}})\leq M
	\end{align*}
	 for all $n,m\in \mathbb{N}\cup\{0\}$, where $M=\max\{\nu_p(x_i):i=0,1,\dots,k-1\}$. Therefore, by Lemma \ref{lem1},  $R((x_n)_{n\geq 0})$ is not dense in $\mathbb{Q}_p$.
\end{proof}
\section{Acknowledgements}
We are very grateful to the referee for the careful reading of the paper and for the comments which helped us to improve the manuscript. We thank Piotr Miska for many helpful discussions.

\end{document}